\newtheorem{theorem}{Theorem}
\newtheorem{lemma}{Lemma}
\newtheorem{assumption}{Assumption}
\newtheorem{corollary}{Corollary}
\patchcmd{\section}{\scshape}{\bfseries\scshape}{}{}
\renewcommand{\@secnumfont}{\bfseries}
\def\subsection{\@startsection{subsection}{3}%
  \z@{.5\linespacing\@plus.7\linespacing}{.2\linespacing}%
  {\normalfont\bfseries}}
\newcommand{\ssda}{$SSDA$}
\newcommand{\esdacd}{$ESDACD$}
\newcommand{\msda}{$MSDA$}
\newcommand{\xbar}{\bar{X}}
\title[Accelerated Decentralized Optimization with Local Updates]{Accelerated Decentralized Optimization with Local Updates for Smooth and Strongly Convex Objectives}
\date{}
\author{
Hadrien Hendrikx $^\dagger$ 
\qquad Francis Bach \qquad Laurent Massouli\'e $^\dagger$
}
\address{INRIA - Département d’informatique de l’ENS \\
Ecole normale supérieure, CNRS, INRIA \\ 
PSL Research University, 75005 Paris, France}
\address{$\dagger$ MSR-INRIA Joint Centre}
\email{hadrien.hendrikx@inria.fr}
\email{laurent.massoulie@inria.fr}
\email{francis.bach@inria.fr}
\begin{document}

\maketitle

\begin{abstract}
In this paper, we study the problem of minimizing a sum of smooth and strongly convex functions split over the nodes of a network in a decentralized fashion. We propose the algorithm \esdacd, a decentralized accelerated algorithm that only requires local synchrony. Its rate depends on the condition number $\kappa$ of the local functions  as well as the network topology and delays. Under mild assumptions on the topology of the graph, \esdacd~takes a time $O((\tau_{\max} + \Delta_{\max})\sqrt{{\kappa}/{\gamma}}\ln(\epsilon^{-1}))$ to reach a precision $\epsilon$ where $\gamma$ is the spectral gap of the graph, $\tau_{\max}$ the maximum communication delay and $\Delta_{\max}$ the maximum computation time. Therefore, it matches the rate of \ssda~\citep{scaman2017optimal}, which is optimal when $\tau_{\max} = \Omega\left(\Delta_{\max}\right)$. Applying \esdacd~to quadratic local functions leads to an accelerated randomized gossip algorithm of rate $O(  \sqrt{\theta_{\rm gossip}/n})$ where $\theta_{\rm gossip}$ is the rate of the standard randomized gossip~\citep{boyd2006randomized}. To the best of our knowledge, it is the first asynchronous gossip algorithm with a provably improved rate of convergence of the second moment of the error. We illustrate these results with experiments in idealized settings. 
\end{abstract}

\section{Introduction}
Many modern machine learning applications require to process more data than one computer can handle, thus forcing to distribute work among computers linked by a network. In the typical machine learning setup, the function to optimize can be represented as a sum of local functions $f(x) = \sum_{i=1}^n f_i(x)$, 
where each~$f_i$ represents the objective over the data stored at node $i$. This problem is usually solved incrementally by alternating rounds of gradient computations and rounds of communications~\citep{nedic2009distributed, boyd2011distributed, duchi2012dual, shi2015extra,mokhtari2016dsa, scaman2017optimal, nedic2017achieving}.

Most approaches assume a centralized network with a master-slave architecture in which workers compute gradients and send it back to a master node that aggregates them. There are two main different flavors of algorithms in this case, whether the algorithm is based on stochastic gradient descent \citep{zinkevich2010parallelized, recht2011hogwild} or randomized coordinate descent \citep{nesterov2012efficiency, liu2015asynchronous,liu2015asynchronousjmlr,fercoq2015accelerated, hannah2018texttt}. Although this approach usually works best for small networks, the central node represents a bottleneck both in terms of communications and computations. Besides, such architectures are not very robust since the failure of the master node makes the whole system fail. In this work, we focus on decentralized architectures in which nodes only perform local computations and communications. These algorithms are generally more scalable and more robust than their centralized counterparts~\citep{lian2017can}. This setting can be used to handle a wide variety of tasks~\citep{colin2016gossip}, but it has been particularly studied for stochastic gradient descent, with the D-PSGD algorithm \citep{nedic2009distributed, ram2009asynchronous,ram2010distributed} and its extensions \citep{lian2017asynchronous,tang2018d}.

A popular way to make first order optimization faster is to use Nesterov acceleration~\citep{nesterov2013introductory}. Accelerated gradient descent in a dual formulation yields optimal synchronous algorithms in the decentralized setting \citep{scaman2017optimal, ghadimi2013multi}. Variants of accelerated gradient descent include the acceleration of the coordinate descent algorithm~\citep{nesterov2012efficiency,allen2016even,nesterov2017efficiency}, that we use in this paper to solve the problem in \citet{scaman2017optimal}. This approach yields different algorithms in which updates only involve two neighboring nodes instead of the full graph. Our algorithm can be interpreted as an accelerated version of \citet{gower2015stochastic, necoara2017random}. Updates consist in gossiping gradients along edges that are sequentially picked from the same distribution independently from each other.

Using coordinate descent methods on the dual allows to have local gradient updates. Yet, the algorithm also needs to perform a global contraction step involving all nodes. In this paper, we introduce Edge Synchronous Dual Accelerated Coordinate Descent (\esdacd), an algorithm that takes advantage of the acceleration speedup in a decentralized setting while requiring only \emph{local} synchrony. This weak form of synchrony consists in assuming that a given node can only perform one update at a time, and that for a given node, updates have to be performed in the order they are sampled. It is called the \emph{randomized} or \emph{asynchronous} setting in the gossip literature~\citep{boyd2006randomized}, as opposed to the synchronous setting in which all nodes perform one update at each iteration. Following this convention, we may call \esdacd~an \emph{asynchronous} algorithm. The locality of the algorithm allows parameters to be fine-tuned for each edge, thus giving it a lot of flexibility to handle settings in which the nodes have very different characteristics.

Synchronous algorithms force all nodes to be updated the same number of times, which can be a real problem when some nodes, often called \emph{stragglers} are much slower than the rest. Yet, we show that we match (up to a constant factor) the speed rates of optimal synchronous algorithms such as \ssda~ \citep{scaman2017optimal} even in idealized homogeneous settings in which nodes never wait when performing synchronous algorithms. In terms of efficiency, we match the oracle complexity of \ssda~with lower communication cost. This extends a result that is well-known in the case of averaging, \emph{i.e.}, that randomized gossip algorithms match the rate of synchronous ones~\citep{boyd2006randomized}. We also exhibit a clear experimental speedup when the distributions of nodes computing power and local smoothnesses have a high variance.

Choosing quadratic $f_i$ functions leads to solving the distributed average consensus problem, in which each node has a variable $c_i$ and for which the goal is to find the mean of all variables $\bar{c} = \frac{1}{n}\sum_{i=1}^n c_i$. It is a historical problem \citep{degroot1974reaching, chatterjee1977towards} that still attracts attention \citep{cao2006accelerated, boyd2006randomized, loizou2018accelerated} with many applications for averaging measurements in sensor networks \citep{xiao2005scheme} or load balancing \citep{diekmann1999efficient}. Fast synchronous algorithms to solve this problem exist~\citep{oreshkin2010optimization} but no asynchronous algorithms match their rates. We show that \esdacd~is faster at solving distributed average consensus than standard asynchronous approaches \citep{boyd2006randomized, cao2006accelerated} as well as more recent ones~\citep{loizou2018accelerated} that do not show improved convergence rates for the second moment of the error. The complexity of gossip algorithms generally depends on the smallest non-zero eigenvalue of the gossip matrix $W$, a symmetric semi-definite positive matrix of size $n \times n$ ruling how nodes aggregate the values of their neighbors such that ${\rm Ker}(W) = {\rm Vec}(\mathds{1})$ where $\mathds{1}$ is the constant vector. We improve the rate from $\lambda^+_{\min}(W)$ to $O\left(\frac{1}{\sqrt{n}}\sqrt{\lambda^+_{\min}(W)}\right)$ where $\lambda^+_{\min}(W) \leq \frac{1}{n - 1}$ is the smallest non-zero eigenvalue of the gossip matrix, thus gaining several orders of magnitude in terms of scaling for sparse graphs. In particular, in well-studied graphs such as the grid, we match (up to logarithmic factors that we do not consider) the $O(n^{3/2})$ iterations complexity of advanced gossip algorithms presented by \citet{dimakis2010gossip}.

\section{Model}
The communication network is represented by a graph $\mathcal{G} = (V, E)$. When clear from the context, $E$ will also be used to designate the number of edges. Each node~$i$ has a local function $f_i$ on $\mathbb{R}^d$ and a local parameter $x_i \in \mathbb{R}^d$. The global cost function is the sum of the functions at all nodes: $F(x) = \sum_{i=1}^n f_i(x_i)$ Each $f_i$ is assumed to be $L_i$-smooth and $\sigma_i$-strongly convex, which means that for all $x, y \in \mathbb{R}^{d}$:\vspace{-5pt}
\begin{equation}
f_i(x) - f_i(y) \leq \nabla f_i(y)^T(x - y) + \frac{L_i}{2} \|x - y\|^2
\end{equation}
\begin{equation}
f_i(x) - f_i(y) \geq \nabla f_i(y)^T(x - y) + \frac{\sigma_i}{2} \|x - y\|^2.
\end{equation}
Note that the fenchel conjugate $f_i^*$ of $f_i$ (defined in Equation~\eqref{eq:fenchel}) is ($L_i^{-1}$)-strongly convex and ($\sigma_i^{-1}$)-smooth, as shown in~\citet{kakade2009duality}. We denote $L_{\max} = \max_i L_i$ and $\sigma_{\min} = \min_i \sigma_i$. Then, we denote $\kappa_l = \frac{L_{\max}}{\sigma_{\min}}$. $\kappa_l$ is an upper bound of the condition number of all $f_i$ as well as an upper bound of the global condition number. Adding the constraint that all nodes should eventually agree on the final solution, so the optimization problem can be cast as:
\begin{equation}
\label{eq:primal_problem}
\min_{x \in \mathbb{R}^{n \times d}:\ x_i = x_j \ \forall i,j \in \{1, ..., n \} } F(x).
\end{equation}
We assume that a communication between nodes $i,j \in V$ takes a time $\tau_{ij}$. If $(i,j) \notin E$, the communication is impossible so $\tau_{ij} = \infty$. Node $i$ takes time $\Delta_i$ to compute its local gradient.

\section{Algorithm}
In this section, we specify the Edge Synchronous Decentralized Accelerated Coordinate Descent (\esdacd) algorithm. We first give a formal version in Algorithm~\ref{algo:adacd} and prove its convergence rate. Then, we present the modifications needed to obtain the implementable version given by Algorithm~\ref{algo:adacd_eff}. 

\subsection{Problem derivation}
In order to obtain the algorithm, we consider a matrix $A \in \mathbb{R}^{n \times E}$ such that ${\rm Ker}(A^T) = {\rm Vec}(\mathds{1})$ where $\mathds{1} = \sum_{i=1}^n e_i$ and $e_i \in \mathbb{R}^{n \times 1}$ is the unit vector of size $n$ representing node $i$. Similarly, we will denote $e_{ij} \in \mathbb{R}^{E \times 1}$ the unit vector of size $E$ representing coordinate $(i,j)$. Then, the constraint in Equation~\eqref{eq:primal_problem} can be expressed as $A^T x = 0$ because if $x \in {\rm Ker}(A^T)$ then all its coordinates are equal and the problem writes:
\begin{equation}
\min_{x \in \mathbb{R}^{n \times d}: \ A^T x = 0} F(x).
\end{equation}
This problem is equivalent to the following one:
\begin{equation}
    \min_{x \in \mathbb{R}^{n\times d}} \max_{\lambda \in \mathbb{R}^{E \times d}}  F(x) - \langle \lambda, A^Tx \rangle,
\end{equation}
where the scalar product is the usual scalar product over matrices $\langle x,y \rangle = {\rm Tr}\left(x^T y\right)$ because the value of the solution is infinite whenever the constraint is not met. This problem can be rewritten: 
\begin{equation}
     \max_{\lambda \in \mathbb{R}^{E\times d}} \min_{x \in \mathbb{R}^{n\times d}} F(x) - \langle A \lambda, x \rangle
\end{equation}
because $F$ is convex and $A^T \mathds{1} = 0$. Then, we obtain the dual formulation of this problem, which writes:\vspace{-3pt}
\begin{equation}
\label{eq::dual_formulation}
\max_{\lambda \in \mathbb{R}^{E \times d}} -F^*(A\lambda),\vspace{-3pt}
\end{equation}
where $F^*$ is the Fenchel conjugate of $F$ which is obtained by the following formula:\vspace{-3pt}
\begin{equation}
    \label{eq:fenchel}
    F^*(y) = \max_{x \in \mathbb{R}^{n\times d}} \langle x,y \rangle - F(x).\vspace{-3pt}
\end{equation}
$F^*$ is well-defined and finite for all $y \in \mathbb{R}^{E\times d}$ because $F$ is strongly convex. We solve this problem by applying a coordinate descent method. If we denote $F_A^*: \lambda \rightarrow F^*(A\lambda)$ then the gradient of $F^*_A$ in the direction $(i,j)$ is equal to $\nabla_{ij}F^*_A = e_{ij}^T A^T \nabla F^*$. Therefore, the sparsity pattern of $A e_{ij}$ will determine how many nodes are involved in a single update. Since we would like to have local updates that only involve the nodes at the end of a single edge, we choose $A$ such that, for any $\mu_{ij} \in \mathbb{R}$:
\begin{equation}
\label{eq:A_matrix}
    A e_{ij} = \mu_{ij} (e_i - e_j).
\end{equation}
This choice of $A$ satisfies $e_{ij}^TA^T \mathds{1} = 0$ for all $(i,j) \in E$ and ${\rm Ker}(A^T) \subset {\rm Vec}(\mathds{1})$ as long as $(V, E_+)$ is connex where $E_+ = \{(i,j) \in E, \mu_{ij}>0\}$. Such $A$ happens to be canonical since it is a square root of the Laplacian matrix if all $\mu_{ij}$ are chosen to be equal to $1$. When not explicitly stated, all $\mu_{ij}$ are assumed to be constant so that $A$ only reflects the graph topology. Other choices of $A$ involving more than two nodes per row are possible and would change the trade-off between the communication cost and computation cost but they are beyond the scope of this paper.

\subsection{Formal algorithm}

The algorithm can then be obtained by applying ACDM \citep{nesterov2017efficiency} on the dual formulation. We need to define several quantities. Namely, we denote $p_{ij} \in \mathbb{R}$ the probability of selecting edge $(i,j)$ and $\sigma_A \in \mathbb{R}$ the strong convexity of $F_A^*$. $A^+ \in \mathbb{R}^{E \times n}$ is the pseudo-inverse of $A$ and $\|x\|^2_{A^+A} = x^T A^+A x$ for $x\in \mathbb{R}^{E\times 1}$. Variable $S \in \mathbb{R}$ is such that for all $(i,j) \in {E}$, 
$$e_{ij}^T A^+A e_{ij} \mu_{ij}^2 p_{ij}^{-2}(\sigma_i^{-1} + \sigma_j^{-1}) \leq S^2.$$
We define $\delta = \theta \frac{1 - \theta}{1 + \theta} \in \mathbb{R}$ with\vspace{-3pt}
\begin{equation}
\label{eq:rate_theta}
\theta^2 = \min_{ij} \frac{p_{ij}^2}{\mu_{ij}^2 e_{ij}^T A^+A e_{ij}} \frac{\sigma_A}{\sigma_i^{-1} + \sigma_j^{-1}} \geq \frac{\sigma_A}{S^2}.
\end{equation}
Finally, $\eta_{ij} = \frac{1}{1 + \theta} \big(\mu_{ij}^{-2}(\sigma_i^{-1} + \sigma_j^{-1})^{-1} +(p_{ij}S^2)^{-1}\big) \in \mathbb{R}$ and
\begin{equation}
    g_{ij}(y_t) = e_{ij} e_{ij}^T A^T \nabla F^*(A y_t) \in \mathbb{R}^{E \times d}.
\end{equation}

\begin{algorithm}
\caption{Asynchronous Decentralized Accelerated Coordinate Descent}
\label{algo:adacd}
\begin{algorithmic}
\STATE $y_0 = 0$, $v_0 = 0$, $t = 0$
\WHILE{$t < T$}
\STATE Sample $(i,j)$ with probability $p_{ij}$
\STATE $y_{t+1} = (1 - \delta) y_t + \delta v_t - \eta_{ij} g_{ij}(y_t)$
\STATE $v_{t+1} = (1 - \theta) v_t + \theta y_t - \frac{\theta}{\sigma_A p_{ij}} g_{ij}(y_t)$
\ENDWHILE
\end{algorithmic}
\end{algorithm}

\begin{theorem}
\label{thm:adacd}
Let $y_t$ and $v_t$ be the sequences generated by Algorithm~\ref{algo:adacd}. Then:
\vspace{-5pt}\begin{equation}
 2\left(\mathbb{E}[F_A^*(x_t)] - F_A^*(x^*)\right) + \sigma_A \mathbb{E}[r_t^2] \leq C(1 - \theta)^{t},\vspace{-5pt}
\end{equation}
with $x_t = (1 + \theta) y_t - \theta v_t$, $x^* \in \arg\min_{x} F_A^*(x)$, $r_t^2 = \|v_t - x^*\|^2_{A^+A}$ and $C = r_0^2 + 2\left(F_A^*(x_0) - F_A^*(x^*)\right)$.
\end{theorem}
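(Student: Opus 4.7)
The algorithm is Nesterov's accelerated coordinate descent method (ACDM, \citealp{nesterov2017efficiency}) applied to the dual objective $F_A^*$, with two twists: the descent coordinates $e_{ij}$ are non-standard via $A$, and $F_A^*$ is only strongly convex in the seminorm $\|\cdot\|_{A^+A}$. My plan is therefore to replay the standard ACDM Lyapunov argument, adapted to this seminorm.

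As preliminaries I would record two properties of $F_A^*$. First, coordinate-wise smoothness: along direction $e_{ij}$, $\nabla_{ij}F_A^*$ is $L_{ij}$-Lipschitz with $L_{ij}=\mu_{ij}^2(\sigma_i^{-1}+\sigma_j^{-1})$. This follows from $\nabla^2 F_A^*(\lambda)=A^{T}\nabla^2 F^*(A\lambda)A$ together with $Ae_{ij}=\mu_{ij}(e_i-e_j)$ and the $\sigma_i^{-1}$-smoothness of each $f_i^*$. Second, strong convexity of $F_A^*$ in the seminorm $\|\cdot\|_{A^+A}$ with parameter $\sigma_A$, inherited from the strong convexity of $F^*$ restricted to ${\rm Im}(A)$.

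The core of the proof is to set $\Phi_t:=2(F_A^*(x_t)-F_A^*(x^*))+\sigma_A r_t^2$ and show $\mathbb{E}[\Phi_{t+1}\mid\mathcal{F}_t]\le(1-\theta)\Phi_t$; iterating gives the theorem. The computation splits in two. (a) Using the updates of $y_{t+1}$ and $v_{t+1}$ and the identity $\delta=\theta(1-\theta)/(1+\theta)$, a direct calculation gives
\begin{equation}
x_{t+1}=(1+\theta)y_{t+1}-\theta v_{t+1}=y_t-\alpha_{ij}\,g_{ij}(y_t),\qquad \alpha_{ij}:=(1+\theta)\eta_{ij}-\tfrac{\theta^2}{\sigma_A p_{ij}}.
\end{equation}
The inequality $\theta^2\le\sigma_A/S^2$ combined with the definition of $\eta_{ij}$ gives $\alpha_{ij}\ge 1/L_{ij}$, so coordinate smoothness along $e_{ij}$ yields a per-step descent bound on $F_A^*(x_{t+1})-F_A^*(y_t)$ in terms of $(\nabla_{ij}F_A^*(y_t))^2$. (b) Expanding $r_{t+1}^2=\|v_{t+1}-x^*\|_{A^+A}^2$ via the update for $v_{t+1}$, using $\mathbb{E}[p_{ij}^{-1}g_{ij}(y_t)]=\nabla F_A^*(y_t)$ and $A^+A\,g_{ij}=g_{ij}$ (since $g_{ij}\in{\rm Im}(A^T)$), taking conditional expectation produces $(1-\theta)\sigma_A r_t^2$ plus a linear term in $\langle\nabla F_A^*(y_t),y_t-x^*\rangle$ and a gradient-squared term weighted by $\theta^2/(\sigma_A p_{ij})$. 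The linear term is absorbed using strong convexity in the seminorm to produce $-2\theta(F_A^*(y_t)-F_A^*(x^*))$ and $-\sigma_A\theta\|y_t-x^*\|_{A^+A}^2$. Adding the two bounds and using the definition of $\eta_{ij}$ (chosen precisely so that the remaining $\alpha_{ij}$- and $\theta^2/(\sigma_A p_{ij})$-weighted gradient-squared terms cancel edgewise after combination with $F_A^*(y_t)-F_A^*(x_t)$) yields the desired recursion.

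The main obstacle is the bookkeeping: one must verify that the exact constants $\theta$, $\delta$, $\eta_{ij}$ produce clean cancellations, in particular that the two gradient-squared terms arising from steps (a) and (b) combine to a nonpositive quantity for every edge, and that every norm appearing is routed through $\|\cdot\|_{A^+A}$ rather than the ambient Euclidean norm (this is where the pseudo-inverse structure matters, and where the requirement $e_{ij}^{T}A^{+}Ae_{ij}\,\mu_{ij}^2 p_{ij}^{-2}(\sigma_i^{-1}+\sigma_j^{-1})\le S^2$ enters). Once these verifications are in place, the telescoping in expectation is routine.
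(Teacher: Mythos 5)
Your plan is the paper's proof: the same two ingredients (an ESO/coordinate-smoothness bound along $e_{ij}$ with constant $\mu_{ij}^2(\sigma_i^{-1}+\sigma_j^{-1})$, and strong convexity of $F_A^*$ in the $\|\cdot\|_{A^+A}$ seminorm with modulus $\sigma_A=\lambda_{\min}^+(A^TA)/L_{\max}$), the same expansion of $\|v_{t+1}-x^*\|^2_{A^+A}$, and the same expectation computation $\mathbb{E}[p_{ij}^{-1}U_{ij}]=I$. The only cosmetic difference is that the paper runs the Nesterov--Stich argument with time-varying coefficients $A_t,B_t$ satisfying $a_{t+1}^2S^2=A_{t+1}B_{t+1}$ and then shows that for $A_0=1$, $B_0=\sigma_A$ they collapse to $A_t=(1-\theta)^{-t}$, $B_t=\sigma_A A_t$, which after dividing by $A_t$ is exactly your constant-coefficient contraction $\mathbb{E}[\Phi_{t+1}\mid\mathcal{F}_t]\le(1-\theta)\Phi_t$ for $\Phi_t=2(F_A^*(x_t)-F_A^*(x^*))+\sigma_A r_t^2$.

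Two local points need fixing. First, your justification ``$A^+A\,g_{ij}=g_{ij}$ since $g_{ij}\in{\rm Im}(A^T)$'' is false: $g_{ij}(y_t)=e_{ij}\,e_{ij}^TA^T\nabla F^*(Ay_t)$ is supported on the single coordinate $e_{ij}$, and $e_{ij}\notin{\rm Im}(A^T)={\rm Ker}(A)^\perp$ whenever the graph has cycles. The correct computation keeps the projection factor, $\|g_{ij}(y_t)\|^2_{A^+A}=(e_{ij}^TA^+Ae_{ij})\,\|\nabla_{ij}F_A^*(y_t)\|^2$, which is precisely where the $e_{ij}^TA^+Ae_{ij}$ in the definition of $S^2$ and $\theta$ originates; the identity $A^+Aw=w$ is only applied to the \emph{full} gradient $\nabla F_A^*(y_t)=A^T\nabla F^*(Ay_t)$ after taking the expectation in the cross term. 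If you literally replace $\|g_{ij}\|^2_{A^+A}$ by $\|g_{ij}\|^2$ you still get a valid upper bound but you prove the theorem only for the larger $S$ obtained by dropping $e_{ij}^TA^+Ae_{ij}\le1$, i.e., a strictly worse $\theta$. Second, the direction of your step-size inequality is reversed: since $\theta^2=\sigma_A/\max_{ij}Q_{ij}\ge\sigma_A/S^2$ with $Q_{ij}=e_{ij}^TA^+Ae_{ij}\mu_{ij}^2p_{ij}^{-2}(\sigma_i^{-1}+\sigma_j^{-1})$, one gets $\alpha_{ij}=(1+\theta)\eta_{ij}-\theta^2/(\sigma_Ap_{ij})\le 1/L_{ij}$, not $\ge$; and a descent bound of the form $-\tfrac{1}{2L_{ij}}\|\nabla_{ij}F_A^*(y_t)\|^2$ requires the step to be (at most, ideally exactly) $1/L_{ij}$ --- a step larger than $1/L_{ij}$ would degrade the constant. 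The paper avoids this bookkeeping by defining the auxiliary iterate $x_{t+1}=y_t-h_{ij}U_{ij}\nabla F_A^*(y_t)$ with $h_{ij}=1/L_{ij}$ exactly and only identifying it with $(1+\theta)y_{t+1}-\theta v_{t+1}$ in the case $\theta=\sqrt{\sigma_A}/S$; you should either do the same or take $S^2=\max_{ij}Q_{ij}$ so that equality holds.
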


Theorem~\ref{thm:adacd} shows that Algorithm~\ref{algo:adacd} converges with rate~$\theta$. Lemma~\ref{lemma:sc_FA}, in Appendix~\ref{appendix:proof} shows that 
\begin{equation}
    \sigma_A \geq \frac{\lambda_{\min}^+(A^TA)}{L_{\max}},
\end{equation}
where $\lambda_{\min}^+(A^TA) \in \mathbb{R}$ is the smallest eigenvalue of $A^TA$. The condition number of the problem then appears in the $L_{\max}\left(\sigma_i^{-1} + \sigma_j^{-1}\right)$ term whereas the other terms are strictly related to the topology of the graph. Parameter $\theta$ is invariant to the scale of $\mu$ because rescaling  $\mu$ would also multiply $\lambda_{\min}^+(A^TA)$ by the same constant. The $p_{ij}^2 / (\sigma_i^{-1} + \sigma_j^{-1})$ term indicates that non-smooth edges should be sampled more often, and the square root dependency is consistent with known results for accelerated coordinate descent methods~\citep{allen2016even, nesterov2017efficiency}. If both sampling probabilities and smoothnesses are fixed, the $\mu_{ij}$ terms can be used to make the dual coordinate (which corresponds to the edge) smoother so that larger step sizes can be used to compensate for the fact that they are only rarely updated. Yet, this may decrease the spectral gap of the graph and slow convergence down.

\begin{proof}
The proof consists in evaluating $\|v_{t+1} - x^*\|^2_{A^+A}$ and follows the same scheme as by \citet{nesterov2017efficiency}. However, $F_A^*$ is not strongly convex because matrix $A^TA$ is generally not full rank. Yet, $F_A^*$ is strongly convex for the pseudo-norm $A^+A$ and the value of $F_A^*(x)$ only depends on the value of $x$ on ${\rm Ker}(A)^\perp$. \citet{gower2018accelerated} develop a similar proof in the quadratic case but without assuming any specific structure on $A$. The detailed proof can be found in Appendix~\ref{appendix:proof}.
\end{proof}

\subsection{Practical algorithm}
Algorithm~\ref{algo:adacd} is written in a form that is convenient for analysis but it is not practical at all. Its logically equivalent implementation is described in Algorithm~\ref{algo:adacd_eff}. All nodes run the same procedure with a different rank $r$ and their own local functions $f_r$ and variables $\theta_r$, $v_t(r)$ and $y_t(r)$. For convenience, we define $B = \begin{pmatrix} 1 - \theta & \theta \\ \delta & 1 - \delta \end{pmatrix}$ and $s_{ij} = \begin{pmatrix}
\frac{\theta \mu_{ij}^2}{p_{ij} \sigma_A} & \mu_{ij}^2 \eta_{ij}
\end{pmatrix}^T$.

\begin{algorithm}
\caption{Asynchronous Decentralized Accelerated Coordinate Descent}
\label{algo:adacd_eff}
\begin{algorithmic}[1]
\STATE $r$ \COMMENT{Id of the node}
\STATE $seed$ \COMMENT{The common seed}

\STATE $z_r = 0$, $y_0(r) = 0$, $v_0(r) = 0$, $t = 0$
\STATE Initialize random generator with $seed$
\WHILE{$t < T$}
\STATE Sample $e$ from $P$
\IF{$\exists j \ / \ e \in \{(r,j), (j,r)\}$}
\STATE $\begin{pmatrix} v_t(r)^T \\ y_t(r)^T \end{pmatrix}_r = B^{t - t_r} \begin{pmatrix} v_{t_r}(r)^T \\ y_{t_r}(r)^T \end{pmatrix}$
\STATE $z_r = \nabla f^*_r\left(y_t(r)\right)$
\STATE $send\_gradient(x_r, j)$ \hspace{15pt} \COMMENT{\emph{non blocking}}
\STATE $z_{dist} = receive\_gradient(j)$ \ \ \COMMENT{\emph{blocking}}
\STATE $g_t(r) = s_e \left(z_r - z_{dist}\right)$ 
\STATE $\begin{pmatrix} v_{t + 1}(r)^T \\ y_{t + 1}(r)^T \end{pmatrix}_r = B \begin{pmatrix} v_{t}(r)^T \\ y_{t}(r)^T \end{pmatrix} - g_t(r)^T$
\STATE $t_r = t + 1$
\ENDIF
\STATE t = t + 1
\ENDWHILE
\STATE \textbf{return} $z_r$
\end{algorithmic}
\end{algorithm}

Note that each update only involves two nodes, thus allowing for many updates to be run in parallel. Algorithm~\ref{algo:adacd_eff} is obtained by multiplying the updates of Algorithm~\ref{algo:adacd} by $A$ on the left. This has the benefit of switching from edge variables (of size $E \times d$) to node variables (of size $n \times d$). Then, if $y_t$ corresponds to the variable of Algorithm~\ref{algo:adacd}, $y_t(i) = e_i^TAy_t$ represents the local $y_t$ variable of node $i$ and is used to compute the gradient of $f^*_i$. We obtain $v_t(i)$ in the same way. The updates can be expressed as a matrix multiplication (contraction step, making $y_t$ and $v_t$ closer), plus a gradient term which is equal to $0$ if the node is not at one end of the sampled edge. The multiplication by $B^{t - t_r}$ corresponds to catching up the global contraction steps for updates in which node $r$ did not take part. The form of $s_{ij}$ comes from the fact that $A e_{ij} e_{ij}^T A^T = \mu_{ij}^2 (e_i - e_j)(e_i - e_j)^T$.

\subsection{Communication schedule}
\label{sec:com_schedule}
 Even though updates are actually local, nodes need to keep track of the total number of updates performed (variable $t$) in order to properly execute Algorithm~\ref{algo:adacd_eff}.

This problem can be handled by generating in advance the sequence of all communications and then simply unrolling this sequence as the algorithm progresses. All nodes perform the neighbors selection protocol starting with the same seed and only consider the communications they are involved in. Therefore, they can count the number of iterations completed.

This way of selecting neighbours can cause some nodes to wait for the gradient of a busy node before they can actually perform their update. Since the communication schedule is defined in advance, they cannot choose a free neighbor and exchange with him instead. However, any way of making edges sampled independent from the previous ones would be equivalent to generating the sequence in advance. Indeed, choosing free neighbors over busy ones would introduce correlations with the current state and therefore with the edges sampled in the past.

\section{Performances}
\subsection{Homogeneous decentralized networks}
In this section, we introduce two network-related assumptions under which the performances of \esdacd~are provably comparable to the performances of randomized gossip averaging or \ssda. We denote $p_{\max} = \max_{ij} p_{ij}$ and $p_{\min} = \min_{ij} p_{ij}$. We also note $\bar{p}(\mathcal{G}) = \max_i p_i$ and $\underbar{p}(\mathcal{G}) = \min_i p_i$ the maximum and minimum probabilities of nodes of a graph $\mathcal{G}$ where $p_i = \sum_{i=1}^n p_{ij}$. We note $d_{\max}$ and $d_{\min}$ the maximum and minimum degrees in the graph. The dependence on $\mathcal{G}$ will be omitted when clear from the context.

\begin{assumption}
\label{assumption:regular_degree_graphs}
We say that a family of graph $\mathcal{G}$ with edge weights $p$ is quasi-regular if there exists a constant $c$ such that for $n \in \mathbb{N}$, $p_{\max} \leq c p_{\min}$ and $d_{\max} \leq c d_{\min}$.
\end{assumption}

Assumption~\ref{assumption:regular_degree_graphs} is satisfied for many standard graphs and probability distribution over edges. In particular, it is satisfied by the uniform distribution for regular degree graphs. 

\begin{assumption}
\label{assumption:almost-symmetry}
The family of graphs $\mathcal{G}$ is such that there exists a constant $c$ such that for $n \in \mathbb{N}$, $\max_{ij}e_{ij}^TA^+Ae_{ij} \leq c \frac{n}{E}$ where $A$ is of the form of Equation~\eqref{eq:A_matrix} with $\mu_{ij} = 1$ and uniquely defines $\mathcal{G}(n)$. 
\end{assumption}

This second assumption essentially means that removing one edge or another should have a similar impact on the connectivity of the graph. It is verified with $c = 1$ if the graph is completely symmetric (ring or complete graph). Since $A^+A$ is a projector, $e_{ij}^T A^+A e_{ij} \leq 1$ so Assumption~\ref{assumption:almost-symmetry} holds true any time the ratio $\frac{n}{E}$ is bounded below. In particular, the grid, the hypercube, or any random graph with bounded degree respect Assumption~\ref{assumption:almost-symmetry}. 

\subsection{Average time per iteration}
\esdacd~updates are much cheaper than the updates of any global synchronous algorithm such as \ssda. However, the partial synchrony discussed in Section~\ref{sec:com_schedule} may drastically slow the algorithm down, making it inefficient to use cheaper iterations. Theorem~\ref{thm:time_per_iteration} shows that this does not happen for regular graphs with homogeneous probabilities. We note $\tau_{\max}$ the maximum delay of all edges.

\begin{theorem}
\label{thm:time_per_iteration}
If we denote $T_{\max}(k)$ the time taken by \esdacd~to perform $k$ iterations when edges are sampled according to the distribution $p$:
\begin{equation}
    \bar{\tau} = \mathbb{E}\left[\frac{1}{k} T_{\max}(k)\right] \leq c \bar{p} \tau_{\max}
\end{equation}
with a constant $c < 14$. 
\end{theorem}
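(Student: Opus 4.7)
The plan is to reduce the analysis to per-node completion times. Let $f_i(k)$ denote the time at which node $i$ completes its last task among the first $k$ iterations, so $T_{\max}(k)=\max_i f_i(k)$. The local-synchrony protocol of Section~\ref{sec:com_schedule} enforces the recurrence
\begin{equation*}
f_i(k) = \max\bigl(f_i(k-1),\, f_j(k-1)\bigr) + \tau_{ij}
\end{equation*}
whenever iteration $k$ uses edge $(i,j)$ (and $f_i(k)=f_i(k-1)$ otherwise). The goal is then to bound $\mathbb{E}[T_{\max}(k)]/k$ by $c\bar{p}\tau_{\max}$.

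A first building block is to control the raw local workload $B_i(k)=\sum_{t\le k,\; i\in e_t}\tau_{e_t}$. As $B_i(k)$ is a sum of $k$ i.i.d.\ bounded terms with mean at most $p_i\tau_{\max}\le \bar p\tau_{\max}$, a Hoeffding inequality combined with a union bound over $n$ nodes gives $\mathbb{E}[\max_i B_i(k)]/k \le \bar p\tau_{\max}(1+o_k(1))$. This matches the target scaling for the ``work'' component of $T_{\max}$, so the remaining issue is to bound the synchronization wait introduced by the $\max$ in the recurrence.

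To account for the waits, I would trace the recurrence backwards from an index $i^\star$ with $f_{i^\star}(k)=T_{\max}(k)$: this yields a strictly decreasing ``critical path'' of iterations $k_1>k_2>\cdots>k_\ell$ whose $\tau$-weights sum exactly to $T_{\max}(k)$. Consecutive iterations on the path share a node, so the backward trace naturally defines a walk on the vertices that hops to the other endpoint of the current edge exactly when the $\max$ is realized at the partner rather than at the current node. The main obstacle is that this walk can keep switching nodes, so the path is not confined to the tasks of any single node and a direct bound by one $B_i(k)$ is not available.

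To resolve this, my plan is to exploit Assumption~\ref{assumption:regular_degree_graphs}: under quasi-regularity all marginals $p_i$ are comparable and the backward walk is close to a uniform random walk on the graph, which distributes the expected critical-path weight across nodes evenly. A Lyapunov/moment-generating argument, for instance bounding $\mathbb{E}\bigl[\exp\bigl(\alpha(f_i(k)-k\bar p\tau_{\max})\bigr)\bigr]$ uniformly in $i$ for a sufficiently small $\alpha>0$, would then yield a per-node tail bound that transfers through the union bound to $\max_i f_i(k)$. Explicit tracking of $\alpha$ together with the Hoeffding constants should deliver the announced $c<14$ by an elementary finite computation.
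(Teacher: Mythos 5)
Your setup is sound and coincides with the paper's: the recurrence $f_i(k)=\max\bigl(f_i(k-1),f_j(k-1)\bigr)+\tau_{ij}$ is exactly the quantity the paper analyses (its $T_i(t)$), and you correctly identify the real difficulty, namely that the backward critical path hops between nodes, so no single workload $B_i(k)$ controls it; the Hoeffding bound on $B_i(k)$ is therefore a dead end, as you concede. The gap is in your proposed resolution. The backward trace is \emph{not} close to a uniform random walk: at each step it moves to whichever endpoint finished \emph{later}, i.e.\ it adaptively selects the heaviest among the exponentially many admissible backward paths. Its weight is a maximum over a combinatorially large family of correlated path weights, and no statement about a ``typical'' walk distributing weight evenly can bound that maximum --- controlling it is precisely the content of the theorem. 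Moreover, the theorem is stated for an arbitrary distribution $p$ and does not assume Assumption~\ref{assumption:regular_degree_graphs}, so invoking quasi-regularity both weakens the claim and does not repair the argument.

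The missing idea is a union bound over \emph{all} backward paths, which requires linearizing the max-recursion. The paper does this by introducing $X^t(i,w)$, the number of backward paths of length $w$ ending at $i$, which satisfies the linear recursion $X^{t+1}(i,w)=X^t(i,w-1)+X^t(j,w-1)$ when $(i,j)$ fires, so that $T_i(t)=\max\{w: X^t(i,w)>0\}$; Markov's inequality, the generating function $(1-\min_i p_i+2\bar p z)^t$ and a binomial tail bound then yield $c<14$. Your exponential-moment idea can be made to work, but only through the same linearization: using $\mathbb{E}[e^{\alpha\max(X,Y)}]\le \mathbb{E}[e^{\alpha X}]+\mathbb{E}[e^{\alpha Y}]$, the potential $\Phi_t=\sum_i\mathbb{E}[e^{\alpha f_i(t)}]$ satisfies $\Phi_{t+1}\le \bigl(1+\bar p(2e^{\alpha\tau_{\max}}-1)\bigr)\Phi_t$, whence $\mathbb{E}[T_{\max}(t)]\le \alpha^{-1}\ln n+\alpha^{-1}t\bar p(2e^{\alpha\tau_{\max}}-1)$, and the choice $\alpha=1/\tau_{\max}$ gives an asymptotic constant $2e-1<14$. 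That step --- turning the max into a sum so the recursion becomes linear and the expectation can be pushed through --- is the crux of the proof, and it is absent from your sketch; as written, the argument does not go through.
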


The proof of Theorem~\ref{thm:time_per_iteration} is in Appendix~\ref{appendix:average_time}. Note that the constant can be improved in some settings, for example if all nodes have the same degrees and all edges have the same weight then a tighter bound $c < 4$ holds.

\begin{corollary}
\label{corr:linear_speedup}
If $\mathcal{G}$ satisfies Assumption~\ref{assumption:regular_degree_graphs} then there exists $c > 0$ such that for any $n\in \mathbb{N}$, the expected average time per iteration taken by \esdacd~in $\mathcal{G}(n)$ when edges are sampled uniformly verifies:
\begin{equation}
\label{eq:linear_speedup}
    \mathbb{E}\left[T_{\max}(k)\right] \leq c \frac{\tau_{\max}}{n} k + o(k).
\end{equation}
\end{corollary}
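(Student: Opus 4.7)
The plan is to reduce the corollary to a direct application of Theorem~\ref{thm:time_per_iteration} by bounding the quantity $\bar{p}(\mathcal{G})$ in terms of $n$ under the quasi-regularity assumption.

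First, under uniform edge sampling, $p_{ij} = 1/E$ for every edge $(i,j)$, so that $p_i = \sum_{j \sim i} p_{ij} = d_i / E$ and in particular $\bar{p} = d_{\max}/E$. The next step is to turn this into a $1/n$ scaling using the handshake identity $\sum_{i=1}^n d_i = 2E$, which immediately yields $n\, d_{\min} \leq 2E$, i.e.\ $d_{\min}/E \leq 2/n$. Combining this with the quasi-regularity hypothesis $d_{\max} \leq c\, d_{\min}$ from Assumption~\ref{assumption:regular_degree_graphs} gives
\begin{equation}
\bar{p} \;=\; \frac{d_{\max}}{E} \;\leq\; \frac{c\, d_{\min}}{E} \;\leq\; \frac{2c}{n}.
\end{equation}

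Then I would simply plug this bound into the conclusion of Theorem~\ref{thm:time_per_iteration}: for any $k$,
\begin{equation}
\mathbb{E}\!\left[\tfrac{1}{k} T_{\max}(k)\right] \;\leq\; c'\, \bar{p}\, \tau_{\max} \;\leq\; \frac{2c\, c'}{n}\, \tau_{\max},
\end{equation}
and multiplying through by $k$ yields the claimed inequality with constant $2c\,c'$. The $o(k)$ slack in the statement absorbs any boundary/start-up terms one may wish to carry along when converting the asymptotic mean rate of Theorem~\ref{thm:time_per_iteration} into a finite-$k$ estimate, so no further work is required there.

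There is no real obstacle here: all the technical work—bounding the time per iteration by $\bar{p}\tau_{\max}$ despite the partial-synchrony waiting discussed in Section~\ref{sec:com_schedule}—has already been done inside Theorem~\ref{thm:time_per_iteration}. The corollary is purely a combinatorial observation that, on quasi-regular graphs with uniform sampling, $\bar{p}$ scales as $1/n$, so that iterations can be performed at a rate linear in $n$ for essentially the cost of a single communication delay.
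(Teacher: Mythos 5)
Your proof is correct and follows exactly the route the paper intends: the corollary is an immediate consequence of Theorem~\ref{thm:time_per_iteration} once one observes that uniform sampling gives $\bar{p} = d_{\max}/E$ and the handshake identity plus $d_{\max} \leq c\, d_{\min}$ from Assumption~\ref{assumption:regular_degree_graphs} yield $\bar{p} \leq 2c/n$. The paper does not even write out a separate proof for this corollary, and your handling of the $o(k)$ term is consistent with how the bound in Theorem~\ref{thm:time_per_iteration} is actually established.
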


Corollary~\ref{corr:linear_speedup} shows that when all nodes have comparable activation frequencies then the expected time required to complete one \esdacd~iteration scales as the inverse of the number of nodes in the network. This result essentially means that the synchronization cost of locking edges does not grow with the size of the network and so iterations will not be longer on a bigger network. At any given time, a constant fraction of the nodes is actively performing an update (rather than waiting for a message) and this fraction does not shrink as the network grows. The time per iteration can be as high as $\tau_{\max}$ for some graph topologies that break Assumption~\ref{assumption:regular_degree_graphs}, \emph{e.g.}, star networks. These topologies are more suited to centralized algorithms because some nodes take part in almost all updates. 

\subsection{Distributed average consensus}
Algorithm~\ref{algo:adacd_eff} solves the problem of distributed gossip averaging if we set $f_i(\theta) = \frac{1}{2} \|\theta - c_i\|^2$. In this setting, $f_i^*(x) = \frac{1}{2}\|x + c_i\|^2 - \frac{1}{2}\|c_i\|^2$ and so $\nabla f_i^*(x) = x + c_i$. Local smoothness and strong convexity parameters are all equal to $1$.

At each round, an edge is chosen and nodes exchange their current estimate of the mean (which is equal to $e_i^T y_t + c_i$ for node $i$). Yet, they do not update it directly but they keep two sequences $y_t$ and $v_t$ that are updated according to a linear system. One step simply consists in doing a convex combination of these values at the previous step, plus a mixing of the current value with the value of the chosen neighbor.

The standard randomized gossip iteration consists in choosing an edge $(i,j)$ and replacing the current values of nodes $i$ and $j$ by their average. If we denote $\mathcal{E}_2(t)$
the second moment of the error at time $t$:
\begin{equation}
    \mathcal{E}_2(t) \leq (1 - \theta_{\rm gossip})^{2t} \mathcal{E}_2(0),
\end{equation}  
where $\theta_{\rm gossip} = \lambda_{\min}^+(\bar{W})$, with $\bar{W} = \frac{1}{E} L$ if $L$ is the Laplacian matrix of the graph \citep{boyd2006randomized}. 

\begin{corollary}
\label{crl:comparison_gossip}
If $\mathcal{G}$ satisfies Assumption~\ref{assumption:almost-symmetry} then there exists $c > 0$ such that for any $n \in \mathbb{N}$, if $\theta_{ESDACD}$ is the rate \esdacd~in $\mathcal{G}(n)$ and $\theta_{\rm gossip}$ the rate of randomized gossip averaging when edges are sampled uniformly then they verify:
\begin{equation}
    \theta_{ESDACD} \geq \frac{c}{\sqrt{n}} \sqrt{\theta_{\rm gossip}}.
\end{equation}
\end{corollary}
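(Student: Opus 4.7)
The plan is to substitute the gossip-averaging parameters into the rate formula (eq:rate_theta) and then use Assumption 2 together with the identification $\lambda_{\min}^+(A^TA) = E\, \theta_{\rm gossip}$ to extract the $\sqrt{\theta_{\rm gossip}/n}$ scaling.

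First I would specialize the setup. For the quadratic functions $f_i(x) = \tfrac12\|x-c_i\|^2$, each $f_i$ is $1$-smooth and $1$-strongly convex, so $\sigma_i^{-1}+\sigma_j^{-1}=2$ and $L_{\max}=1$. Choosing $\mu_{ij}=1$, uniform edge sampling gives $p_{ij}=1/E$. With this choice $A$ is a square root of the Laplacian $L$, so $AA^T=L$, and hence $A^TA$ shares the nonzero spectrum of $L$. The randomized-gossip rate from the excerpt is $\theta_{\rm gossip}=\lambda_{\min}^+(\bar W)=\tfrac{1}{E}\lambda_{\min}^+(L)=\tfrac{1}{E}\lambda_{\min}^+(A^TA)$, which gives the key identity $\lambda_{\min}^+(A^TA)=E\,\theta_{\rm gossip}$.

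Next I would plug these values into (eq:rate_theta). The lemma quoted after Theorem 1 gives $\sigma_A \ge \lambda_{\min}^+(A^TA)/L_{\max} = \lambda_{\min}^+(A^TA)$. Then
\begin{equation*}
\theta_{ESDACD}^2 \;=\; \min_{ij}\frac{p_{ij}^2}{\mu_{ij}^2\,e_{ij}^T A^+A\, e_{ij}}\cdot\frac{\sigma_A}{\sigma_i^{-1}+\sigma_j^{-1}}
\;\ge\; \frac{(1/E)^2}{\max_{ij} e_{ij}^T A^+A\, e_{ij}}\cdot\frac{\lambda_{\min}^+(A^TA)}{2}.
\end{equation*}
Applying Assumption 2 bounds $\max_{ij} e_{ij}^T A^+A\, e_{ij}\le c\,n/E$, so
\begin{equation*}
\theta_{ESDACD}^2 \;\ge\; \frac{1/E^{2}}{c\,n/E}\cdot\frac{E\,\theta_{\rm gossip}}{2}
\;=\; \frac{\theta_{\rm gossip}}{2cn}.
\end{equation*}
Taking square roots yields the claimed inequality with constant $1/\sqrt{2c}$.

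There is no serious obstacle: the only thing that must be checked carefully is the spectral identification between $A^TA$ and the Laplacian (to translate $\theta_{\rm gossip}$ into a quantity involving $\lambda_{\min}^+(A^TA)$) and the correct constant for uniform edge sampling. Both steps are routine, and Assumption 2 is exactly the hypothesis tailored to control the worst edge-projection term $\max_{ij} e_{ij}^T A^+A e_{ij}$ that would otherwise spoil the bound on irregular graphs.
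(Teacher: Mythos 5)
Your proof is correct and follows essentially the same route as the paper: specialize the rate formula~\eqref{eq:rate_theta} to the quadratic case with uniform sampling, bound $\max_{ij} e_{ij}^TA^+Ae_{ij}$ via Assumption~\ref{assumption:almost-symmetry}, and identify $\lambda_{\min}^+(A^TA) = E\,\theta_{\rm gossip}$ through $\bar W = L/E$. The only cosmetic difference is that the paper takes $\mu_{ij}^2 = 1/2$ so that $Ae_{ij}e_{ij}^TA^T$ is exactly the pairwise-averaging matrix, whereas you take $\mu_{ij}=1$; since $\theta$ is invariant to the scale of $\mu$, this only changes the constant.
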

\vspace{-5pt}We can use tools from \citet{mohar1997some} to estimate the eigenvalues of usual graphs. In the case of the complete graph, $\theta_{\rm gossip} \approx n^{-1}$ and so $\theta_{ESDACD} \approx  \theta_{\rm gossip}$. Actually, we can show that in this case, \esdacd~iterations are exactly the same as randomized gossip iterations. In the case of the ring graph, $\theta_{\rm gossip} \approx n^{-3}$ and so $\theta_{ESDACD} \approx n^{-2}$ which is significantly better for $n$ large. For the grid graph, a similar analysis yields $\theta_{ESDACD} = O(n^{-3/2})$. Achieving this message complexity on a grid is an active research area and is often achieved with complex algorithms like geographic gossip~\citep{dimakis2006geographic}, relying on overlay networks, or \emph{LADA}~\citep{li2007location}, using lifted Markov chains~\citep{diaconis2000analysis}. Although synchronous gossip algorithms achieved this rate~\citep{oreshkin2010optimization}, finding an asynchronous algorithm that could match the rates of geographic gossip was still, to the best of our knowledge, an open area of research \citep{dimakis2010gossip}.

Therefore, \esdacd~shows improved rate compared with standard gossip when the eigengap of the gossip matrix is small. To our knowledge, this is the first time that better convergence rates of the second moment of the error are proven. Indeed, though they both show improved rates in expectation, the shift-register approach \citep{cao2006accelerated, liu2013analysis} has no proven rates for the second moment and the rates for the second moment of heavy ball gossip \citep{loizou2018accelerated} do not improve over standard randomized gossip averaging. Surprisingly, our results show that gossip averaging is best analyzed as a special case of a more general optimization algorithm that is not even restricted to quadratic objectives. Standard acceleration techniques shed a new light on the problem and allows for a better understanding of it. 

We acknowledge that the improved rates of convergence do not come for free. The accelerated gossip algorithm requires some global knowledge on the graph (eigenvalues of the gossip matrix and probability of activating each edge). Even though these quantities can be approximated relatively well for simple graphs with a known structure, evaluating them can be more challenging for more complex graphs (and can be even harder than or of equivalent difficulty to the problem of average consensus). Yet, we believe that \esdacd~as a gossip algorithm can still be practical in many cases, in particular when values need to be averaged over the same network multiple times or when computing resources are available at some time but not at the time of averaging. Such use cases can typically be encountered in sensor networks, in which the computation of such hyperparameters can be anticipated before deployment. In any case, the analysis shows that standard optimization tools are useful to analyze randomized gossip algorithms. 

\subsection{Comparison to SSDA}
The results described in Theorem~\ref{thm:adacd} are rather precise and allow for a fine tuning of the edges probabilities depending on the topology of the graph and of the local smoothnesses. However, the rate cannot always be expressed in a way that makes it simple to compare with \ssda.

\begin{corollary}
\label{crl:comparison_ssda}
Let $\mathcal{G}$ be a family of graph verifying Assumptions~\ref{assumption:regular_degree_graphs} and \ref{assumption:almost-symmetry}. There exists $c > 0$ such that: 
\begin{equation}
    \frac{\theta_{ESDACD}}{\bar{\tau}_{ESDACD}} \geq c \frac{1}{\tau_{\max}}\sqrt{\frac{\gamma}{\kappa}} = c \ \frac{\theta_{SSDA}}{\bar{\tau}_{SSDA}},
\end{equation}
where $\theta_{ESDACD}$ is the rate of \esdacd~when edges are sampled uniformly and $\theta_{SSDA}$ the rate of \ssda~when both algorithms use matrix $A$ as defined in Equation~\eqref{eq:A_matrix}.
\end{corollary}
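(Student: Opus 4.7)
The plan is to evaluate the rate formula in Equation~\eqref{eq:rate_theta} under the specific choices $\mu_{ij}=1$ and $p_{ij}=1/E$ mandated by the statement, use the two assumptions to control the purely topological factors, and then multiply by the average time per iteration from Corollary~\ref{corr:linear_speedup}. Plugging $p_{ij}=1/E$ into \eqref{eq:rate_theta} gives
\begin{equation*}
\theta_{ESDACD}^{2} \;=\; \min_{(i,j)} \frac{\sigma_{A}}{E^{2}\, e_{ij}^{T}A^{+}Ae_{ij}\,(\sigma_{i}^{-1}+\sigma_{j}^{-1})}.
\end{equation*}
Assumption~\ref{assumption:almost-symmetry} bounds $e_{ij}^{T}A^{+}Ae_{ij}\le c\,n/E$, Lemma~\ref{lemma:sc_FA} gives $\sigma_{A}\ge \lambda_{\min}^{+}(A^{T}A)/L_{\max}$, and trivially $\sigma_{i}^{-1}+\sigma_{j}^{-1}\le 2/\sigma_{\min}$. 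Combining these three bounds produces
\begin{equation*}
\theta_{ESDACD}^{2} \;\ge\; \frac{\lambda_{\min}^{+}(L)}{c_{1}\, nE\,\kappa},
\end{equation*}
where I used $\lambda_{\min}^{+}(A^{T}A)=\lambda_{\min}^{+}(AA^{T})=\lambda_{\min}^{+}(L)$, with $L$ the graph Laplacian (recall $A$ is a square root of $L$ when $\mu_{ij}=1$).

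The next step is to convert this bound, which involves the topological quantities $n$, $E$ and $\lambda_{\min}^{+}(L)$, into one expressed via the spectral gap $\gamma=\lambda_{\min}^{+}(L)/\lambda_{\max}(L)$. The trace identity $\mathrm{tr}(L)=\sum_{i}d_{i}=2E$ gives the unconditional lower bound $\lambda_{\max}(L)\ge\mathrm{tr}(L)/n=2E/n$, so $\lambda_{\min}^{+}(L)\ge 2\gamma E/n$. Substituting this into the previous display yields
\begin{equation*}
\theta_{ESDACD}^{2} \;\ge\; \frac{c_{2}\,\gamma}{n^{2}\,\kappa},\qquad\text{hence}\qquad \theta_{ESDACD} \;\ge\; \frac{c_{3}}{n}\sqrt{\frac{\gamma}{\kappa}}.
\end{equation*}
The factor $1/n$ is exactly what Corollary~\ref{corr:linear_speedup} cancels: since $\mathcal{G}$ satisfies Assumption~\ref{assumption:regular_degree_graphs}, the expected time per iteration is at most $\bar{\tau}_{ESDACD}\le c_{4}\,\tau_{\max}/n$. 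Dividing the two bounds gives $\theta_{ESDACD}/\bar{\tau}_{ESDACD}\ge (c/\tau_{\max})\sqrt{\gamma/\kappa}$. The equality with $\theta_{SSDA}/\bar{\tau}_{SSDA}$ up to a constant is then immediate, since \ssda\ has per-iteration rate $\Theta(\sqrt{\gamma/\kappa})$ against matrix $A$ and each iteration costs $\Theta(\tau_{\max})$, as recalled in the introduction.

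I do not expect a serious obstacle here; the only substantive point is the alignment of scalings. Assumption~\ref{assumption:almost-symmetry} is calibrated so that $e_{ij}^{T}A^{+}Ae_{ij}$ scales like $n/E$, which together with the $1/E^{2}$ from uniform sampling produces a factor $nE$ in the denominator of $\theta^{2}$; this is exactly what the trace bound $\lambda_{\max}(L)\ge 2E/n$ absorbs, leaving a clean $1/n^{2}$. The bookkeeping worth double-checking is that the trace inequality requires no regularity beyond connectedness (so Assumption~\ref{assumption:regular_degree_graphs} is used only through Corollary~\ref{corr:linear_speedup}), and that the constants from the two assumptions combine into a single $c$ depending only on the graph family.
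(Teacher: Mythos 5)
Your proposal is correct and follows essentially the same route as the paper's proof: lower-bound the rate \eqref{eq:rate_theta} via Lemma~\ref{lemma:sc_FA}, Assumption~\ref{assumption:almost-symmetry} and the trace bound $\lambda_{\max}(AA^T)\geq \mathrm{tr}(AA^T)/n$, then divide by the per-iteration time from Theorem~\ref{thm:time_per_iteration}. The only differences are cosmetic (you work with $\theta^2$, take $\mu_{ij}=1$ rather than $\mu_{ij}^2=1/2$, and cancel the $E$ factors early so that Assumption~\ref{assumption:regular_degree_graphs} enters only through Corollary~\ref{corr:linear_speedup}, whereas the paper also uses it to absorb the $d_{\min}/d_{\max}$ ratios), and your bookkeeping is sound.
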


The proof is in Appendix~\ref{appendix:comparison}. Actually, sampling does not need to be uniform but a ratio $\sqrt{p_{\min} / p_{\max}}$ would appear in the constant otherwise. The result of Corollary~\ref{crl:comparison_ssda} means that asynchrony comes almost for free for decentralized gradient descent in these cases. Indeed, both algorithms scale similarly in the network and optimization parameters. Note that in this case, we compare ESDACD and SSDA (and not MSDA) meaning that we implicitly assume that communication times are greater than computing times. This is because ESDACD is very efficient in terms of communication but not necessarily in terms of gradients.

Corollary~\ref{crl:comparison_ssda} states that the rates per unit of time are similar. Figure~\ref{fig:costs_comparison} compares the two algorithms in terms of network and computational resources usage. \ssda~iterations require all nodes to send messages to all their neighbors, resulting in a very high communication cost. \esdacd~avoids this cost by only performing local updates. \ssda~uses $n / 2$ times more gradients per iterations so both algorithms have a comparable cost in terms of gradients.

\begin{figure*}
\begin{tabular}{c|c|c|c | c}
    Algorithm & Improvement & Communications & Gradients computed & Speed
    \\ \hline
    SSDA & $\sqrt{\frac{\gamma}{\kappa_l}}$ & 2E & n & 1 \\ \hline
    ESDACD & $O\left(\frac{1}{n}\sqrt{\frac{\gamma}{\kappa_l}}\right)$ & 2 & 2 & $O\left(\frac{1}{n}\right)$
\end{tabular}
    \centering
    \caption{Per iteration costs of SSDA and ESDACD for quasi-regular graphs.}
\label{fig:costs_comparison}
\end{figure*}

At each \ssda~iteration, nodes need to wait for the slowest node in the system whereas many nodes can be updated in parallel with ESDACD. \esdacd~can thus be tuned not to sample slow edges too much, or on the opposite to sample quick edges but with highly non-smooth nodes at both ends more often.

Edge updates yield a strong correlation between the probabilities of sampling edges and the final rate. In heterogeneous cases (in terms of functions to optimize as well as network characteristics), the greater flexibility of ESDACD allows for a better fine-tuning of the parameters (step-size) and thus for better rates. 

\section{Experiments}
\subsection{ESDACD vs. gossip averaging}
The goal of this part is to illustrate the rate difference depending on the topology of the graph. We study graphs of $n$ nodes where $10 \%$ of the nodes have value 1 and the rest have value 0. Similar results are obtained with values drawn from Gaussian distributions.

Figures~\ref{fig:gossip_10x10_grid} and \ref{fig:gossip_ring_100} show that \esdacd~consistently beats standard and heavy ball gossip~\citep{loizou2018accelerated}. The clear rates difference for the ring graph shown in  Figure~\ref{fig:gossip_ring_100} illustrates the fact that \esdacd~scales far better for graphs with low connectivity. We chose the best performing parameters from the original paper ($\omega = 1$ and $\beta = 0.5$) for heavy ball gossip. \esdacd~is slightly slower at the beginning because we chose constant and simple learning rates. Choosing $B_0$ and $A_0$ from Appendix~\ref{appendix:proof} as in~\citet{nesterov2017efficiency} would lead to a more complex algorithm with better initial performances.

\begin{figure}
\subfloat[$10 \times 10$ grid.
\label{fig:gossip_10x10_grid}]{\includegraphics[width=0.5\linewidth]{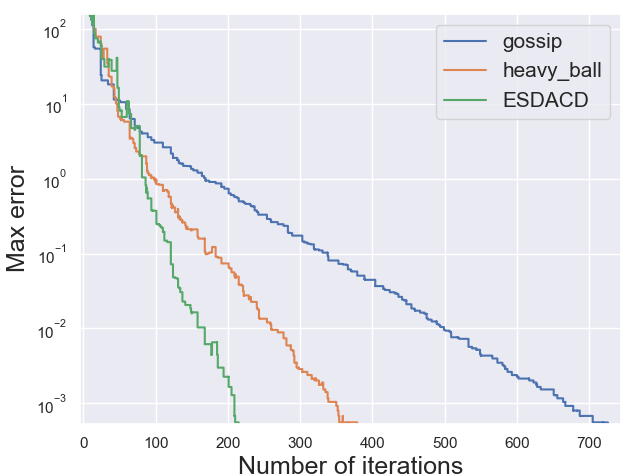}}
\subfloat[Ring graph of size $100$.
\label{fig:gossip_ring_100}]
{\includegraphics[width=0.5\linewidth]{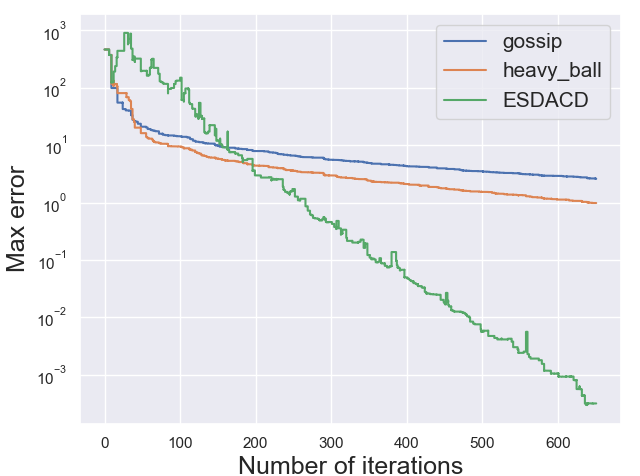}}
\caption{Comparison between ESDACD, pairwise gossip and heavy ball gossip}
\end{figure}

\subsection{ESDACD vs. SSDA}
In order to assess the performances of the algorithm in a fully controlled setting, we perform experiments on two synthetic datasets, similar to the one used by \citet{scaman2017optimal}:
\begin{itemize}
\item Regression: Each node $i$ has a vector of $N$ observations, noted $X_i \in \mathbb{R}^{d \times N}$ with $d=50$ drawn from a centered Gaussian with variance $1$. The targets $y_{i,j}$ are obtained by applying function $g: x \rightarrow \bar{x}_{i,j} + cos(\bar{x}_j) + \epsilon$ where $\bar{x}_j = d^{-1}\mathds{1}^T X_i e_j$ and $\epsilon$ is a centered Gaussian noise with variance $0.25$. At each node, the loss function is $f_i(\theta_i) = \frac{1}{2} \|X_i^T \theta - y_i\|^2 + c_i \|\theta\|^2$ with $c_i = 1$.
\item Classification: Each node $i$ has a vector of $N$ observations, noted $X_i \in \mathbb{R}^{d \times N}$ with $d=50$. Observations are drawn from a Gaussian of variance $1$ centered at $-1$ for the first class and $1$ for the second class. Classes are balanced. At each node, the loss function is $f_i(\theta_i) = \sum_{j=1}^N \ln\left(1 + \exp^{- y_{i,j} X_{i,j}^T \theta} \right) + c_i \|\theta\|^2$ with $c_i=1$.
\end{itemize}
Our main focus is on the speed of execution. Recall that edge $(i,j)$ takes time $\tau_{ij}$ to transmit a message and so if node $i$ starts its $k_i$th update at time $t_i(k_i)$ then $t_i(k_i+1) = \max_{l=i,j} t_l(k_l) + \tau_{ij}$ and the same for $j$. This gives a simple recursion to compute the time needed to execute the algorithm in an idealized setting, that we use as the x-axis for the plots.

To perform the experiments, the gossip matrix chosen for SSDA is the Laplacian matrix and $\mu_{ij}^2 = p_{ij}^{2}(\sigma_i^{-1} + \sigma_j^{-1})^{-1}$ is chosen for ESDACD. The error plotted is the maximum suboptimality $\max_i F(\theta_i) - \min_x F(x)$. Experiments are conducted on the $10 \times 10$ grid network. We perform $n / 4$ times more iteration for \esdacd~than for \ssda. Therefore, in our experiments, an execution of \ssda~uses roughly 2 times more gradients and 8 times more messages (for the grid graph) than an execution of \esdacd. This also allows us to compare the resources used by the 2 algorithms.\\

\textbf{Homogeneous setting:} In this setting, we choose uniform constant delays and $N=150$ for each node. We notice on Figure~\ref{fig:even_smoothness} that~\ssda is roughly two times faster than~\esdacd, meaning that $n / 8$ \esdacd~iterations are completed in parallel by the time \ssda~completes one iteration. This means that in average, a quarter of the nodes are actually waiting to complete the schedule, since 2 nodes engage in each iteration.\\

\begin{figure}
\subfloat[Homogeneous regression.
\label{fig:even_smoothness}]{\includegraphics[width=0.33\linewidth]{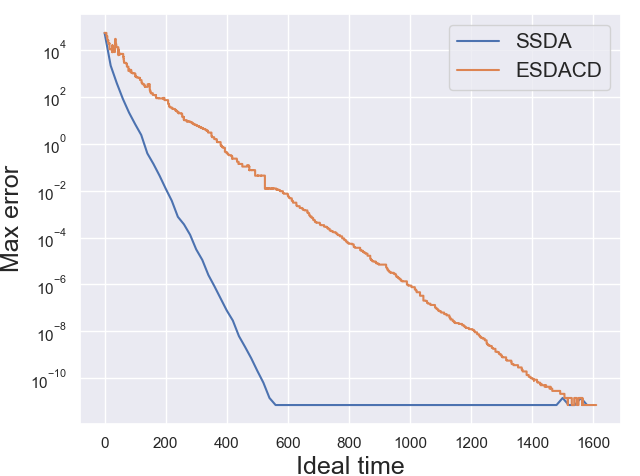}}
\subfloat[Heterogeneous regression.\label{fig:uneven_smoothness}]{\includegraphics[width=0.33\linewidth]{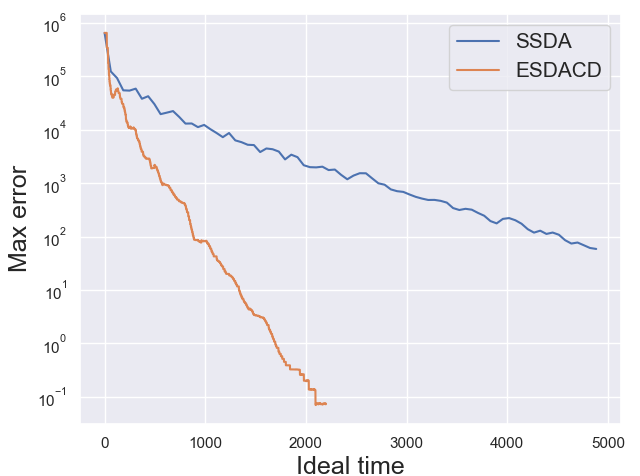}}
\subfloat[Heterogeneous classification.
\label{fig:uneven_smoothness_classif}]{\includegraphics[width=0.33\linewidth]{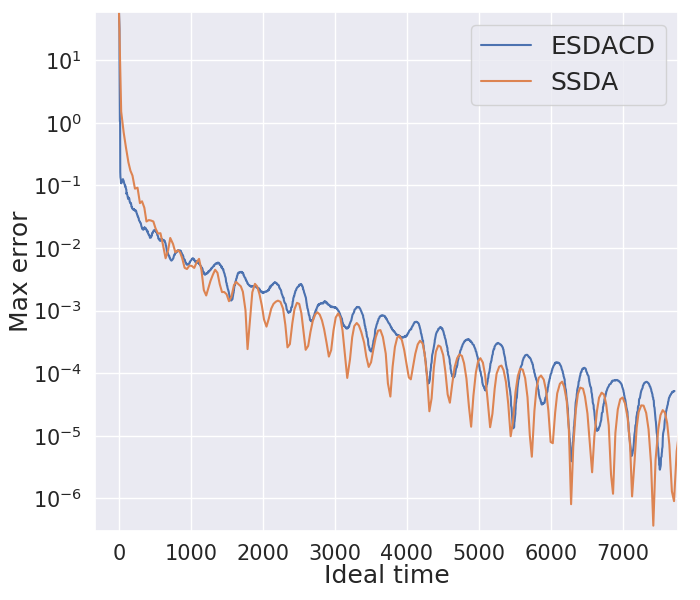}}
\caption{Comparison between the performances of ESDACD and SSDA}
\end{figure}

\textbf{Heterogeneous setting:} In this setting, $N$ is uniformly sampled between 50 (problem dimension) and 300, thus leading to very different values for the local condition numbers. Delays are all exponentially distributed with parameter $1$. Figure~\ref{fig:uneven_smoothness} shows that \esdacd~is computationally more efficient than \ssda~on the regression problem because it has a far lower final error although it uses 2 times less gradients. This can be explained by larger step sizes along regular edges and suggests that \esdacd~adapts more easily to changes in local regularity, even with uniform sampling probabilities. \esdacd~is also much faster since in average, each node performs 2 iterations in half the time needed for one \ssda~iteration. For the classification problem, strong convexity is more homogeneous because it only comes from regularization. Therefore, \esdacd~does not take full advantage of the local structure of the problem and show performances that are similar to those of \ssda.

\section{Conclusion}\vspace{-5pt}
In this paper, we introduced the \emph{Edge Synchronous Dual Accelerated Coordinate Descent} (ESDACD), a randomized gossip algorithm for the optimization of sums of smooth and strongly convex functions. We showed that it matches the performances of \ssda, its synchronous counterpart. Empirically, \esdacd~even outperforms \ssda~in heterogeneous settings. Applying \esdacd~to the distributed average consensus problem yields the first asynchronous gossip algorithm that provably achieves better rates in variance than the standard randomized gossip algorithm, for example matching the rate of geographic gossip~\citep{dimakis2006geographic} on a grid.

Promising lines of work include a communication accelerated version that would match the speed of \msda~\citep{scaman2017optimal} when computations are more expensive than communications, a fully asynchronous extension that could handle late gradients as well as a stochastic version of the algorithm that would only use stochastic gradients of the local functions.

\section*{Acknowledgement}
We acknowledge support from the European Research Council (grant SEQUOIA 724063). 
\bibliographystyle{plainnat}
\bibliography{sample}

\newpage

\appendix 
\onecolumn

\section{Detailed average time per iteration proof}
\label{appendix:average_time}
The goal of this section is to prove Theorem~\ref{thm:time_per_iteration}. The proof develops an argument similar to the one of Theorem~8.33 \citep{baccelli1992synchronization}. Yet, the theorem cannot be used directly and we need to specialize the argument for our problem in order to get a tighter bound. We note $t$ the number of iterations that the algorithm performs, and we introduce the random variable $X^t(i, w)$ such that if edge $(i,j)$ is activated at time $t+1$  (with probability $p_{ij}$), then for all $w \in \mathbb{N}^*$:
\begin{equation*}
    X^{t+1}(i,w) = X^t(i, w - 1) + X^t(j, w-1).
\end{equation*}

and $X^{t+1}(k, w - 1) = X^t(k, w - 1)$ otherwise. We start with the initial conditions $X^0(i, 0) = 1$ and $X^0(i,w) = 0$ for any $w > 0$. The following lemma establishes a relationship between the time taken by the algorithm to complete $t$ iterations and variables $X^t$.

\begin{lemma}
If we note $T_{\max}(t)$ the time at which the last node of the system finishes iteration $t$ then for all $\theta > 0$:

\begin{equation*}
    \mathbb{E}\left[T_{\max}(t)\right] \leq \theta t + \sum_{w \geq \theta t} \sum_{i=1}^n \mathds{E}\left[X^t(i, w)\right].
\end{equation*}
\end{lemma}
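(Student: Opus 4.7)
The plan is to relate $T_{\max}(t)$ to the depth of the dependency DAG of the execution, show that this depth is exactly the largest index $w$ for which some $X^t(i,w)$ is nonzero, and then combine a truncation trick with Markov's inequality to get the stated bound.

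First I would give $X^t(i,w)$ a combinatorial meaning. Build the natural ``activation DAG'': each node $i$ at each of its local iteration counts is a vertex, and when edge $(i,j)$ is activated at step $s+1$ the new vertex for $i$ receives incoming arcs from the previous vertices of both $i$ and $j$ (and symmetrically for $j$). An induction on $s$ shows that $X^s(i,w)$ is exactly the number of directed paths of length $w$ in this DAG that end at the current vertex of node $i$ after $s$ activations: the recursion $X^{s+1}(i,w)=X^s(i,w-1)+X^s(j,w-1)$ is the path-counting recursion at the new vertex, and the initial condition $X^0(i,0)=1$ corresponds to the trivial empty chain at the root. Let $W_{\max}(t)=\max_i\max\{w:X^t(i,w)\ge 1\}$ denote the length of the longest such chain after $t$ iterations. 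Because every update of node $i$ incurs a delay at most the communication time on the activated edge, the time at which node $i$ finishes its last update is bounded by the sum of the delays along its longest dependency chain; in particular $T_{\max}(t)\le W_{\max}(t)$ in units of the relevant delay, which is exactly the bookkeeping used throughout Section~\ref{appendix:average_time}.

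Given this, I would then apply the tail-sum formula to the nonnegative integer-valued random variable $W_{\max}(t)$ and truncate at level $\theta t$:
\begin{equation*}
\mathbb{E}[T_{\max}(t)]\ \le\ \mathbb{E}[W_{\max}(t)]\ =\ \sum_{w\ge 1}\mathbb{P}(W_{\max}(t)\ge w)\ \le\ \theta t + \sum_{w\ge \theta t}\mathbb{P}(W_{\max}(t)\ge w),
\end{equation*}
since each of the first $\lfloor\theta t\rfloor$ terms is at most $1$. A union bound over nodes followed by Markov's inequality then gives
\begin{equation*}
\mathbb{P}(W_{\max}(t)\ge w)\ =\ \mathbb{P}\!\left(\bigcup_i\{X^t(i,w)\ge 1\}\right)\ \le\ \sum_{i=1}^n \mathbb{P}(X^t(i,w)\ge 1)\ \le\ \sum_{i=1}^n \mathbb{E}[X^t(i,w)],
\end{equation*}
and substituting this into the previous display yields the claimed inequality.

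The only real content is the first paragraph: correctly identifying $X^t(i,w)$ with the path-count in the activation DAG and checking that $T_{\max}(t)$ is dominated by the longest chain length in that DAG. Once that structural fact is in place, the remaining steps (truncation, union bound, Markov) are a couple of lines. The subsequent steps in the appendix presumably bound $\mathbb{E}[X^t(i,w)]$ by the expectation of a branching-like random walk on the graph (using the activation probabilities $p_{ij}$) and optimize the cutoff $\theta$, which is why the lemma is stated in this flexible form for every $\theta>0$.
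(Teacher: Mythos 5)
Your proposal is correct and follows essentially the same route as the paper: identify $T_{\max}(t)$ with the largest occupied level of the $X^t$ process (the paper proves the exact identity $T_i(t)=\max\{w: X^t(i,w)>0\}$ by induction, where you argue the inequality via the dependency-DAG path-counting interpretation, which suffices), then truncate at $\theta t$ and apply Markov's inequality to the tail. The only step you leave implicit is the same one the paper also only asserts, namely that occupancy of level $w'\geq w$ implies occupancy of level $w$ (needed for your equality $\mathbb{P}(W_{\max}(t)\geq w)=\mathbb{P}(\cup_i\{X^t(i,w)\geq 1\})$), and it does follow from your path-counting setup.
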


\begin{proof}
We first prove by induction on $t$ that if we denote $T_i(t)$ the time at which node $i$ finishes iteration $t$, then for any $i \in \{1, .., n\}$: 

\begin{equation}
\label{eq:rec_ti}
    T_i(t) = \max_{w \in \mathbb{N}, X^t(i,w) > 0} w.
\end{equation}

To ease notations, we write $w_{\max}(i,t) = \max_{w \in \mathbb{N}, X^t(i,w) > 0} w$. The property is true for $t=0$ because $T_i(0) =0$ for all $i$.

We now assume that it is true for some fixed $t > 0$ and we assume that edge $(k,l)$ has been activated at time $t$. For all $i \notin \{k, l\}$, $T_i(t+1) = T_i(t)$ and for all $w \in \mathbb{N}^*$, $X^{t+1}(i, w - 1) = X^t(i, w - 1)$ so the property is true. Besides,

\begin{align*}
    w_{\max}(k, t+1) &= \max_{w \in \mathbb{N^*}, X^t(k,w - 1) + X^t(l,w - 1) > 0} w \\
    &= \max_{w \in \mathbb{N}, X^t(i,w) + X^t(i,w) > 0} w + 1 \\
    &= 1 + \max\left(w_{\max}(k, t), w_{\max}(l, t)\right)\\
    &= 1 + \max \left(T_k(t), T_l( t)\right) = T_k(t+1).
\end{align*}

We finish the proof of Equation~\eqref{eq:rec_ti} by observing that $k$ and $l$ are completely equivalent.

The form of the recurrence guarantees that for any fixed $t \in \mathbb{N}$ and $w > 1$, if there exists $i$ such that $X^t(i, w) > 0$ then for any $w^\prime < w$, there exists $j$ such that $X^t(j, w^\prime) > 0$. Therefore, 

\begin{equation}
    T_{\max}(t) = \max_{i} \max_{w \in \mathbb{N}, X^t(i,w) > 0} w = \max_{w \in \mathbb{N}, \sum_i X^t(i,w) > 0} w =
    \sum_{w \in \mathbb{N}} \mathds{1}\left(\sum_{i=1}^n X^t(i, w) \geq 1\right),
\end{equation}

because having $X^t(i, w) > 0$ is equivalent to having $X^t(i, w) \geq 1$ since $X^t(i, w)$ is integer valued. Therefore, for any $\theta \in [0, 1]$

\begin{equation*}
    T_{\max}(t) \leq \theta t + \sum_{w \geq \theta t} \mathds{1}\left(\sum_{i=1}^n X^t(i, w) \geq 1\right),
\end{equation*}

and the proof results from taking the expectation of the previous inequality and using Markov inequality on the second term.
\end{proof}

Although there is still a maximum in the expression of $T_i(t)$, the recursion for variable $X$ has a much simpler form. In particular, we will crucially exploit its linearity. We write  $p_i = \sum_{j} p_{ij}$ and introduce $\underbar{p} = \min_i p_i$ and $\bar{p} = \max_i p_i$. We now prove the following Lemma:

\begin{lemma}
\label{lemma:sum_binom}
For all $i$, if $\delta_1 = \underbar{p}$, $\delta_2 = \bar{p}$ and $\delta = \frac{2 \delta_2 - \delta_1}{1 - 2\delta_2}$ then for all $\theta > 0$

\begin{equation}
    \sum_{p \geq \theta t} \mathbb{E}\left[X^t(i, p)\right] \leq \left(1 + \delta\right)^t \mathbb{P}\left[Binom(2\delta_2, t) \geq \theta t\right].
\end{equation}

\end{lemma}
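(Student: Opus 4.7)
The plan is to strengthen the claim to a pointwise bound in $w$ and then sum. Set $r = 2\bar p/(1-2\bar p)$. I would first prove by induction on $t$ the pointwise estimate
\begin{equation*}
\mathbb{E}[X^t(i,w)] \leq (1-\underbar p)^t \binom{t}{w} r^w \qquad \forall\, i, w, t.
\end{equation*}
The case $t=0$ is immediate since $X^0(i,w) = \mathds{1}(w=0)$. For the inductive step, take expectations in the defining recursion to obtain
\begin{equation*}
\mathbb{E}[X^{t+1}(i,w)] = (1-p_i)\, \mathbb{E}[X^t(i,w)] + \sum_{j} p_{ij}\bigl(\mathbb{E}[X^t(i,w-1)] + \mathbb{E}[X^t(j,w-1)]\bigr),
\end{equation*}
apply the inductive hypothesis to the three terms, and use Pascal's identity $\binom{t+1}{w} = \binom{t}{w} + \binom{t}{w-1}$. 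Matching coefficients of $\binom{t}{w}r^w$ and $\binom{t}{w-1}r^{w-1}$ reduces the inductive step to the two elementary inequalities $\underbar p \leq p_i$ and $2p_i(1-2\bar p) \leq 2\bar p(1-\underbar p)$, both of which follow directly from $\underbar p \leq p_i \leq \bar p$.

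Next I would convert the pointwise estimate into the statement. The very definition of $\delta$ gives the identity $(1+\delta)(1-2\bar p) = 1 - \underbar p$, which rewrites the pointwise bound as
\begin{equation*}
(1-\underbar p)^t \binom{t}{w} r^w = (1+\delta)^t \binom{t}{w} (2\bar p)^w (1-2\bar p)^{t-w},
\end{equation*}
the right-hand side being exactly $(1+\delta)^t \cdot \mathbb{P}[\mathrm{Binom}(2\bar p, t) = w]$. Summing over $w \geq \theta t$ and invoking the pointwise bound yields the desired inequality.

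The hard part is guessing the correct ansatz in Step 1. The recursion mixes a "drift" factor $(1-p_i)$ against a "doubling" factor $2p_i$ acting on a shifted index, and an inductive bound has to absorb both through a single application of Pascal's rule. The factor $(1-\underbar p)^t$ is what is needed for the drift term to line up uniformly over $i$, and the particular choice $r = 2\bar p/(1-2\bar p)$ is precisely what makes the doubling inequality $2p_i \leq (1-\underbar p)r$ tight at the extreme $p_i = \bar p$. Once the ansatz is in place, the induction is a short algebraic calculation and the identification with a binomial tail at the end is purely bookkeeping.
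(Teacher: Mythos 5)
Your proof is correct and follows essentially the same route as the paper: both arguments establish the identical pointwise bound $\mathbb{E}[X^t(i,w)] \leq (1+\delta)^t \binom{t}{w}(2\bar{p})^w(1-2\bar{p})^{t-w}$, uniformly in $i$, and then sum over $w \geq \theta t$; the paper verifies this via the generating function $\phi^t(z) = (1-\underbar{p}+2\bar{p}z)^t$ of a majorizing sequence and a coefficient-wise comparison, while you verify it by direct induction with Pascal's identity, which amounts to the same computation. Your version makes the coefficient-wise inequality fully explicit, and shares with the paper the implicit requirement $\bar{p} < 1/2$ (needed for $r$, $\delta$, and $\mathrm{Binom}(2\bar{p},t)$ to make sense), which the paper only disposes of in the subsequent lemma.
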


\begin{proof}

Taking the expectation over the edges that can be activated gives:

\begin{equation}
    \mathbb{E}\left[X^{t+1}(i,w)\right] = \left(1 - p_i\right) \mathbb{E}\left[X^{t}(i,w)\right] + \sum_{j} p_{ij} \mathbb{E}\left[X^{t}(j, w - 1)\right] + p_i \mathbb{E}\left[X^{t}(i, w - 1)\right].
\end{equation}

In particular, for all $i$, $\mathbb{E}\left[X^{t+1}(i,w)\right] \leq \bar{X}^t(w)$ where $\bar{X}^0(w) = 1$ if $w = 0$ and $0$ otherwise, and:

\begin{equation}
    \bar{X}^{t+1}(w) = \left(1 - \underbar{p} \right) \bar{X}^{t}(w) + 2 \bar{p} \bar{X}^{t}(w - 1).
\end{equation}

We now introduce $\phi^t(z) = \sum_{w \in \mathbb{N}} z^w \xbar^t(w)$. A direct recursion leads to:

\begin{equation}
    \phi^t(z) = \left(1 - \underbar{p} + 2 \bar{p} z\right)^t.
\end{equation}

Then, using the fact that $\delta > 0$:

\begin{equation}
    \phi_t(z) \leq (1 + \delta)^t \left(1 - 2\delta_2 + \frac{2 \delta_2}{1 + \delta} z\right)^t \leq (1 + \delta)^t \left(1 - 2\delta_2 + 2 \delta_2 z\right)^t = (1 + \delta)^t \phi_{bin}(2\delta_2, t)(z),
\end{equation}

where $\phi_{bin}(2\delta_2, t)$ is the generating function of the Binomial law of parameters $2\delta_2$ and $t$. The inequalities above on the integral series $\phi_t$ and $(1 + \delta)^t \phi_{bin}(2\delta_2, t)$ actually hold coefficient by coefficient. Therefore, $\mathbb{E}\left[X^t(i, p)\right] \leq (1 + \delta)^t \mathbb{P}\left(Binom(2\delta_2, t) = p\right)$
\end{proof}

We conclude the proof of the theorem with this last lemma:

\begin{lemma}
If $\theta = 6\delta_2 + \delta$ then:

\begin{equation}
    \lim_{t \in \mathbb{N}} \sum_{w \geq \theta t} \mathbb{E}\left[X^t(i,w)\right] = 0
\end{equation}
\end{lemma}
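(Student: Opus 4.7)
The plan is to combine Lemma~\ref{lemma:sum_binom} with a Chernoff-type (exponential Markov) bound on the binomial upper tail, and then simply check that the resulting exponential rate is strictly negative.

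Lemma~\ref{lemma:sum_binom} already reduces the problem to showing that
$$(1+\delta)^t \, \mathbb{P}\bigl[\mathrm{Binom}(2\delta_2, t) \geq \theta t\bigr] \longrightarrow 0.$$
For the binomial upper tail I would use the standard exponential Markov bound: for every $z > 1$,
$$\mathbb{P}\bigl[\mathrm{Binom}(p, t) \geq a t\bigr] \;\leq\; z^{-at}\,\mathbb{E}\bigl[z^{\mathrm{Binom}(p,t)}\bigr] \;=\; z^{-at}\,(1 - p + p z)^t.$$
The idea is to pick a value of $z$ that makes the combined exponent negative. Specialising to $p = 2\delta_2$, $a = \theta = 6\delta_2 + \delta$ and the convenient choice $z = 3$ (so that $1 - p + pz = 1 + 4\delta_2$) gives
$$\sum_{w \geq \theta t} \mathbb{E}\bigl[X^t(i,w)\bigr] \;\leq\; \bigl((1+\delta)(1 + 4\delta_2)\,3^{-\theta}\bigr)^t.$$

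The proof then reduces to the algebraic check that the base lies in $(0,1)$. Taking the logarithm and using $\ln(1+x) \leq x$ gives
$$\ln(1+\delta) + \ln(1+4\delta_2) - \theta \ln 3 \;\leq\; \delta(1 - \ln 3) + \delta_2(4 - 6\ln 3),$$
and both coefficients $1 - \ln 3$ and $4 - 6\ln 3$ are strictly negative. Since $\delta_2 = \bar p > 0$ (and $\delta \geq 0$), the right-hand side is strictly negative, hence the base is in $(0,1)$ and the sum decays geometrically to $0$, as required.

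The only subtle step is the choice of the Chernoff test value $z$. The value $z = 3$ is matched to $\theta = 6\delta_2 + \delta$: it is precisely what makes $\theta \ln 3$ dominate $\delta + 4\delta_2$ after expanding the two logarithms. With a smaller test value (say $z = 2$) the coefficient of $\delta$ in the final inequality would be positive, forcing a separate comparison between $\delta$ and $\delta_2$ via the defining identity $\delta = (2\delta_2 - \delta_1)/(1-2\delta_2)$. This is the reason the constants $6$ and $1$ appearing in front of $\delta_2$ and $\delta$ in the definition of $\theta$ are exactly the ones stated in the lemma.
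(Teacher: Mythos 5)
Your proof is correct, and after the shared reduction via Lemma~\ref{lemma:sum_binom} it takes a genuinely different route to the final estimate. The paper invokes the large-deviations bound $\mathbb{P}\left[\mathrm{Binom}(2\delta_2,t)\geq\theta t\right]\leq e^{-tD(\theta\,\|\,2\delta_2)}$ and must then lower-bound the rate function by hand; this forces it to use the relation $\delta\leq 2\delta_2/(1-2\delta_2)$ and to split into the cases $\delta_2\leq 3/8$ and $\delta_2\geq 3/8$ (the latter disposed of by noting $\theta>1$). You instead leave the Chernoff bound unoptimized, evaluating the exponential Markov inequality at the fixed test point $z=3$; this is a weaker tail bound, but $\theta=6\delta_2+\delta$ is engineered precisely so that $\theta\ln 3$ dominates $\delta+4\delta_2$, and the whole verification collapses to $1-\ln 3<0$ and $4-6\ln 3<0$, with no case split, no comparison between $\delta$ and $\delta_2$, and no need to quote the exact form of the binomial rate function. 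Both arguments rest on the same implicit standing hypotheses inherited from Lemma~\ref{lemma:sum_binom}, namely $\delta\geq 0$ and $2\delta_2\leq 1$ (so that the binomial law there is well defined); you state the first explicitly, which is what your sign argument actually uses. The only step left tacit is that a strictly negative exponent makes the bound $\left((1+\delta)(1+4\delta_2)3^{-\theta}\right)^t$ geometric and hence vanishing, which is immediate.
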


\begin{proof}
We use tail bounds for the Binomial distribution \citep{arratia1989tutorial} in order to get for $\theta \geq 2 \delta_2$:

\begin{equation}
    \ln \mathbb{P}\left[Binom(2\delta_2, t) \geq \theta t\right] \leq - t D(\theta || 2 \delta_2),
\end{equation}
where $D(p ||q) = p \ln \frac{p}{q} + (1 - p)\ln\frac{1-p}{1-q}$ so applying Lemma~\ref{lemma:sum_binom} yields:

\begin{equation}
    \sum_{w \geq \theta t} \mathbb{E}\left[X^t(i,w)\right] \leq e^{-t \left[D(\theta || 2 \delta_2) - \ln(1 + \delta)\right]}.
\end{equation}

Therefore, we are left to prove that $D(\theta || 2 \delta_2) - \ln(1 + \delta) > 0$. However, 

\begin{equation}
    D(\theta || 2 \delta_2) = 2\delta_2 \ln(\frac{2\delta_2}{\theta}) + (1 - 2\delta_2) \ln\frac{1 - 2\delta_2}{1 - \theta} \geq 2\delta_2 \ln(\frac{2\delta_2}{\theta}) - 2\delta_2 + \theta 
\end{equation}

by using that $\frac{x}{1+x} \leq \ln(1 + x) \leq x$. Since $\theta = 6\delta_2 + \delta$ and $\delta \leq \frac{2 \delta_2}{1 - 2\delta_2}$, assuming that $\delta_2 \leq \frac{3}{8}$ yields:

\begin{equation}
D(\theta || 2 \delta_2) \geq 2\delta_2\left[2 - \ln(3 + \frac{\delta}{2 \delta_2})\right] + \delta > \delta \geq \ln(1 + \delta).
\end{equation}

If $\delta_2 \geq \frac{3}{8}$, then $\theta > 1$ so the result is obvious because $X^t(i,w) = 0$ for $w > t$.

\end{proof} 

\section{Execution speed comparisons}
\label{appendix:comparison}
\subsection{Comparison with gossip}
In this section, we prove Corollary~\ref{crl:comparison_gossip}.
\begin{proof}
We consider a matrix $A$ such that $A e_{ij} = \mu_{ij} (e_i - e_j)$ and $\mu_{ij}^2 = \frac{1}{2}$ for all $(i,j) \in E$. Then multiplying by $W_{ij} = Ae_{ij}e_{ij}^TA^T$ corresponds to averaging the values of nodes $i$ and $j$ and so the rate of uniform randomized gossip averaging depends on $\bar{W} = \mathbb{E}[W_{ij}]$.

In this case, applying \esdacd~with matrix $A$ yields a rate of

\begin{equation}
    \theta_{ESDACD} = \min_{ij} \frac{p_{ij}}{\mu_{ij}\sqrt{\sigma_i^{-1} + \sigma_j^{-1}}}\frac{\sqrt{\lambda^+_{\min}(A^TA)}}{\sqrt{e_{ij} A^+A e_{ij}}} \geq \sqrt{\frac{\lambda_{\min}^+(AA^T)}{c n E}}
\end{equation} 

where $c$ is a constant independent of the size of the graph coming from Assumption~\ref{assumption:almost-symmetry}.

Since $\bar{W} = \frac{1}{E} AA^T$ then $\theta_{\rm gossip} = \frac{1}{E} \lambda_{\min}^+(AA^T)$ and so:

\begin{equation}
    \theta_{ESDACD} \geq \frac{c^\prime}{\sqrt{n}} \sqrt{\theta_{\rm gossip}}
\end{equation} 

with $c^\prime=c^{-\frac{1}{2}}$.
\end{proof}

\subsection{Comparison with SSDA}
In this section, we prove Corollary~\ref{crl:comparison_ssda}. \ssda~is based on an arbitrary gossip matrix whereas the rate of \esdacd~is based on a specific matrix $A^TA$ where $Ae_{ij} = \mu_{ij}(e_i - e_j)$. Yet, $W = AA^T$ is a perfectly valid gossip matrix. Indeed, ${\rm Ker}(W) = {\rm Ker}(A) = Vec\left(\mathds{1}\right)$ and $AA^T$ is an $n\times n$ symmetric positive matrix defined on the graph $\mathcal{G}(n)$. Besides, $\lambda_{\min}^+(A^TA) = \lambda_{\min}^+(AA^T)$, which enables us to compare the rates of \ssda~and \esdacd.

\begin{proof}
For arbitrary $\mu$, the rate of \esdacd~ writes:

\begin{equation}
   \theta_{ESDACD} \geq \min_{ij} \frac{p_{ij}}{\mu_{ij} \sqrt{L_{\max}(\sigma_i^{-1} + \sigma_j^{-1}) e_{ij}^TA^+A e_{ij}}} \sqrt{\lambda_{\min}^+(A^TA)}.
\end{equation}

Here, we choose $\mu_{ij}^2= \frac{1}{2}$, which yields the bound:

\begin{equation}
\theta_{ESDACD} \geq p_{\min} \sqrt{\frac{\lambda_{\max}(AA^T)}{\max_{ij} e_{ij}^T A^+A e_{ij}}} \sqrt{\frac{\gamma}{\kappa}}.
\end{equation}

Therefore, combining this with Theorem~\ref{thm:time_per_iteration} and Assumption~\ref{assumption:almost-symmetry} gives:

\begin{equation}
    \frac{\theta_{ESDACD}}{\bar{\tau}_{ESDACD}} \geq \frac{p_{\min}\sqrt{E}}{c \bar{p} \tau_{\max}}  \sqrt{\frac{\lambda_{\max}(AA^T)}{n}}\sqrt{\frac{\gamma}{\kappa}} \geq \frac{c^\prime}{\tau_{\max}} \frac{p_{\min}}{p_{\max}}\sqrt{\frac{d_{\min}}{d_{\max}}}\sqrt{\frac{E}{n d_{\max}}} \sqrt{\frac{\gamma}{\kappa}}
\end{equation}

where we have used that $\lambda_{\max} \geq \frac{1}{n} Tr(AA^T) \geq d_{\min}$ and $\bar{p} \leq p_{\max} d_{\max}$. We then use Assumption~\ref{assumption:regular_degree_graphs} to get that there exists $c^{\prime \prime}$ such that:
\begin{equation}
    \frac{\theta_{ESDACD}}{\bar{\tau}_{ESDACD}} \geq \frac{c^{\prime \prime}}{\tau_{\max}} \sqrt{\frac{\gamma}{\kappa}} = c^{\prime \prime} \frac{\theta_{SSDA}}{\bar{\tau}_{SSDA}}
\end{equation}
\end{proof}

In the proof above, it appears that having probabilities that are too unbalanced harms the convergence rate of \esdacd. However, if these probabilities are carefully selected to match the square root of the smoothness along the edge, and if delays are such that this does not cause very slow edges to be sampled too often then unbalanced probabilities can greatly boost the convergence rate. 

\section{Detailed rate proof}
\label{appendix:proof}
The proof of Theorem~\ref{thm:adacd} is detailed in this section. Recall that we note $A^+$ the pseudo-inverse of $A$ and we define the scalar product $\langle x, y \rangle_{A^+ A} = x^T A^+ A y$. The associated norm is a semi-norm because $A^+ A$ is positive semi-definite. Since $A^+A$ is a projector on the orthogonal of ${\rm Ker}(A)$, it is a norm on the orthogonal of ${\rm Ker}(A)$. 

Our proof follows the key steps of \citet{nesterov2017efficiency}. However, we study the problem in the norm defined by $A^+A$ because our problem is strongly convex only on the orthogonal of ${\rm Ker}(A)$. Matrix $A$ can be tuned so that $F_A^*$ has the same smoothness in all directions, thus leading to optimal rates. We start by two small lemmas to introduce the strong convexity and smoothness inequalities for the $A^+A$ semi-norm. We note $U_{ij} = e_{ij} e_{ij}^T$.

\begin{lemma}[Strong convexity of $F_A^*$]
\label{lemma:sc_FA}
For all $x,y \in \mathbb{R}^E$, 
\begin{equation}
\label{eq:strong_convexity}
F_A^*(x) - F_A^*(y) \geq \nabla F_A^*(y)^T (x - y) + \frac{\sigma_A}{2} \|x - y\|^2_{A^+A}
\end{equation}
with $\sigma_A = \frac{\lambda_{\min}^+(A^TA)}{L_{\max}}$
\end{lemma}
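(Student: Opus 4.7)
The plan is to lift the standard-norm strong convexity of $F^*$ through the linear change of variable $\lambda \mapsto A\lambda$, and then convert the resulting $A^TA$-seminorm bound into an $A^+A$-seminorm bound using a kernel/range decomposition. The main step to be careful about is that $F_A^*$ is never strongly convex in the usual sense whenever $A$ has a nontrivial kernel, so we must restrict the comparison of norms to $\mathrm{Ker}(A)^\perp$.

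First, I would observe that because $F(x)=\sum_i f_i(x_i)$ is separable, its Fenchel conjugate is also separable: $F^*(y)=\sum_i f_i^*(y_i)$. Each $f_i$ is $L_i$-smooth, so by the duality stated in the text (\citet{kakade2009duality}), each $f_i^*$ is $L_i^{-1}$-strongly convex in the Euclidean norm. Summing the componentwise strong-convexity inequalities yields, for all $u,v\in\mathbb{R}^{n\times d}$,
\begin{equation*}
F^*(u) - F^*(v) \geq \nabla F^*(v)^T(u-v) + \frac{1}{2L_{\max}}\|u-v\|^2.
\end{equation*}
Applying this with $u=Ax$ and $v=Ay$, and using the chain rule $\nabla F_A^*(y)=A^T\nabla F^*(Ay)$, gives
\begin{equation*}
F_A^*(x)-F_A^*(y) \geq \nabla F_A^*(y)^T(x-y) + \frac{1}{2L_{\max}}\,(x-y)^T A^T A(x-y).
\end{equation*}

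Next I would relate $(x-y)^T A^TA(x-y)$ to $\|x-y\|^2_{A^+A}$. Write $z=x-y$ and decompose $z=z_{\parallel}+z_{\perp}$ with $z_{\parallel}\in\mathrm{Ker}(A)$ and $z_{\perp}\in\mathrm{Ker}(A)^\perp=\mathrm{Range}(A^T)$. Then $Az=Az_{\perp}$, so $z^TA^TAz=z_{\perp}^TA^TAz_{\perp}$. Since $A^TA$ restricted to $\mathrm{Ker}(A)^\perp$ is invertible with smallest eigenvalue $\lambda_{\min}^+(A^TA)$, we get $z_{\perp}^TA^TAz_{\perp}\geq \lambda_{\min}^+(A^TA)\|z_{\perp}\|^2$. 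Finally, $A^+A$ is the orthogonal projector onto $\mathrm{Range}(A^T)$, hence $\|z\|^2_{A^+A}=z^TA^+Az=\|z_{\perp}\|^2$. Combining these identities,
\begin{equation*}
(x-y)^TA^TA(x-y)\ \geq\ \lambda_{\min}^+(A^TA)\,\|x-y\|^2_{A^+A}.
\end{equation*}

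Plugging this back into the previous display yields exactly the claimed inequality \eqref{eq:strong_convexity} with $\sigma_A\geq \lambda_{\min}^+(A^TA)/L_{\max}$. The only delicate point in the whole argument is the projector identity $\|z\|^2_{A^+A}=\|z_{\perp}\|^2$ and the fact that the seminorm kills exactly the part of $z$ that the conjugate $F_A^*$ is blind to; this is what allows strong convexity to be recovered modulo $\mathrm{Ker}(A)$, and it is also what justifies using the $A^+A$-seminorm throughout the rest of the analysis of Algorithm~\ref{algo:adacd}.
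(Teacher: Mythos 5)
Your proof is correct and follows essentially the same route as the paper: sum the Euclidean strong-convexity inequalities of the conjugates $f_i^*$, lower-bound $L_i^{-1}$ by $L_{\max}^{-1}$, and then pass from the $A^TA$ quadratic form to the $A^+A$ seminorm via $\lambda_{\min}^+(A^TA)$. Your explicit kernel/range decomposition merely spells out the step the paper states as $\|Aw\|^2 \geq \lambda_{\min}^+(A^TA)\|w\|^2_{A^+A}$, so no further comparison is needed.
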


\begin{proof}
Inequality~\eqref{eq:strong_convexity} is obtained by writing the strong convexity inequality for each $f_i^*$ and then summing them. Then, we remark that $L_i \leq L_{\max}$ for all $i$ and that $\|Aw\|^2 = \|Aw\|^2_{A^+A} \geq \lambda_{\min}^+(A^TA) \|w\|^2_{A^+A}$ for $w = x - y$. More specifically:

\begin{align*}
F_A^*(x) - F_A^*(y) &= \sum_{i=1}^n \left(f_i^*\left(e_i^TAx \right) - f_i^*\left(e_i^TAy\right)\right)\\
&\geq \sum_{i=1}^n \nabla f_i^* \left(e_i^TAy\right)^T e_i^T(Ax - Ay) + \frac{1}{2}(Ax - Ay)^T \left( \sum_{i=1}^n L_i^{-1} e_i e_i^T \right) (Ax - Ay)\\
&\geq \nabla F_A^*(y)^T (x - y) +\frac{1}{2L_{\max}} (x - y)^T A^TA (x - y)\\
&\geq \nabla F_A^*(y)^T (x - y) +\frac{\lambda_{\min}(A^TA)}{2L_{\max}} \|x - y\|^2_{A^+A}
\end{align*}
\end{proof}

\begin{lemma}[Smoothness of $F_A^*$]
\label{lemma:smoothness_FA}
We note $x_{t+1} = y_t - h_{kl} U_{kl} \nabla F_A^*(y_t)$ where $h_{kl}^{-1} = \mu_{kl}^2 (\sigma_k^{-1} + \sigma_l^{-1})$. If edge $(k,l)$ is sampled at time $t$,

\begin{equation}
\label{eq:smoothness}
F_A^*(x_{t+1}) - F_A^*(y_t) \leq - \frac{1}{2 \mu_{kl}^2 \left(\sigma_k^{-1} + \sigma_l^{-1}\right)} \| U_{kl} \nabla F_A^*(y_t) \|^2.
\end{equation}

\end{lemma}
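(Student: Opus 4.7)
The plan is to recognize this as a standard ``descent lemma'' for a coordinate step in the edge-space, and to derive it by transporting the update into the node-space where the smoothness of $F^*$ decomposes additively over nodes. Since $F_A^*(\lambda)=F^*(A\lambda)=\sum_{i=1}^n f_i^*(e_i^T A\lambda)$, and since $AU_{kl}=Ae_{kl}e_{kl}^T=\mu_{kl}(e_k-e_l)e_{kl}^T$, the step $x_{t+1}=y_t-h_{kl}U_{kl}\nabla F_A^*(y_t)$ changes $Ay_t$ only in its $k$-th and $l$-th rows. Concretely, setting $g=e_{kl}^T\nabla F_A^*(y_t)\in\mathbb{R}^{1\times d}$, I would write $Ax_{t+1}-Ay_t=-h_{kl}\mu_{kl}(e_k-e_l)g$, so that only $f_k^*$ and $f_l^*$ contribute to $F_A^*(x_{t+1})-F_A^*(y_t)$.

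Next, I would apply the $\sigma_k^{-1}$- and $\sigma_l^{-1}$-smoothness of $f_k^*$ and $f_l^*$ separately to the two coordinate changes of magnitude $h_{kl}\mu_{kl}g$, yielding
\begin{equation*}
F_A^*(x_{t+1})-F_A^*(y_t)\ \leq\ -h_{kl}\mu_{kl}\bigl(\nabla f_k^*(e_k^T Ay_t)-\nabla f_l^*(e_l^T Ay_t)\bigr)^T g+\tfrac{1}{2}h_{kl}^2\mu_{kl}^2(\sigma_k^{-1}+\sigma_l^{-1})\|g\|^2 .
\end{equation*}
The linear term then collapses using the identity $e_{kl}^T\nabla F_A^*(y_t)=(Ae_{kl})^T\nabla F^*(Ay_t)=\mu_{kl}(\nabla f_k^*(e_k^T Ay_t)-\nabla f_l^*(e_l^T Ay_t))$, so that $\nabla f_k^*-\nabla f_l^*=g/\mu_{kl}$ and the linear term is exactly $-h_{kl}\|g\|^2$.

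Finally, substituting $h_{kl}^{-1}=\mu_{kl}^2(\sigma_k^{-1}+\sigma_l^{-1})$ into the quadratic term gives $\tfrac12 h_{kl}\|g\|^2$, leaving the net bound $-\tfrac12 h_{kl}\|g\|^2$. Since $U_{kl}\nabla F_A^*(y_t)=e_{kl}g$ and $\|e_{kl}\|=1$, we have $\|U_{kl}\nabla F_A^*(y_t)\|^2=\|g\|^2$, which is exactly the claimed inequality.

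There is no real obstacle here; it is a one-variable-style descent lemma. The only point that requires care is the bookkeeping when passing from the coordinate $e_{kl}$ in edge-space (where the step lives) to the two node-coordinates $e_k,e_l$ in node-space (where the smoothness constants $\sigma_k^{-1},\sigma_l^{-1}$ are defined), and to make sure the $\mu_{kl}$ factors coming from $Ae_{kl}=\mu_{kl}(e_k-e_l)$ cancel to leave $(\sigma_k^{-1}+\sigma_l^{-1})^{-1}\mu_{kl}^{-2}$ as the effective smoothness of $F_A^*$ in the direction $e_{kl}$.
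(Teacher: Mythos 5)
Your proposal is correct and follows essentially the same route as the paper's proof: both apply the $\sigma_i^{-1}$-smoothness of each $f_i^*$ to the node-space increment $Ax_{t+1}-Ay_t$ and use $Ae_{kl}=\mu_{kl}(e_k-e_l)$ to reduce the sum to the two endpoints, with the linear term collapsing to $-h_{kl}\|U_{kl}\nabla F_A^*(y_t)\|^2$ and the quadratic term to half of it. The only cosmetic difference is that the paper keeps the sum over all $i$ in matrix form before simplifying, whereas you isolate the $k$ and $l$ contributions from the start.
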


Equation~\eqref{eq:smoothness} can be seen as an $ESO$ inequality~\citep{richtarik2016parallel} applied to the directional update $h_{kl} U_{kl} \nabla F_A^*(y_t)$.

\begin{proof}
Assuming that edge $(k,l)$ is drawn at time $t$, we use that each $f_i^*$ is ($\sigma_i^{-1}$)-smooth to write:
\begin{equation*}
f_i^*\left(e_i^TAx_{t+1}\right) - f_i^*\left(e_i^TAy_{t}\right) \leq - h_{kl} \nabla f_i^* \left(e_i^TAy_t\right)^T e_i^TA U_{kl} \nabla F_A^*(y_t) + \frac{1}{2\sigma_i} \|h_{kl} e_i^TA U_{kl} \nabla F_A^*(y_t)\|^2.
\end{equation*}

Summing it over all values of $i$ gives:

\begin{equation*}
F_A^*(x_{t+1}) - F_A^*(y_t) \leq \nabla F_A^*(y_t)^T \left[ - h_{kl} U_{kl} + \frac{1}{2} h_{kl}^2 U_{kl} A^T \sum_{i=1}^n \sigma_i^{-1} e_i e_i^T A U_{kl} \right] \nabla F_A^*(y_t).
\end{equation*}

Then, we decompose by using that $A e_{ij} = \mu_{ij} (e_i - e_j)$ and $U_{kl} = e_{kl} e_{kl}^T$ to get that 

\begin{equation*}
F_A^*(x_{t+1}) - F_A^*(y_t) \leq \nabla F_A^*(y_t)^T U_{kl} \left[ - h_{kl} + \frac{1}{2}h_{kl}^2 \mu_{kl}^2 (\sigma_k^{-1} + \sigma_l^{-1}) \right] \nabla F_A^*(y_t).
\end{equation*}

We conclude the proof by using the fact that $h_{kl} = \frac{1}{\mu_{kl}^2 (\sigma_k^{-1} + \sigma_l^{-1})}$.
\end{proof}

We can now start the proof of Theorem~\ref{thm:adacd}. We first prove the convergence of a different algorithm which is essentially the one by~\citet{nesterov2017efficiency} and show that Algorithm~\ref{algo:adacd} is obtained for a specific choice of initial conditions.

\begin{proof}
More specifically, we choose $A_0, B_0 \in \mathbb{R}$ and recursively define the following coefficients:

\begin{align}
    & \label{eq:s2} a_{t+1}^2 S^2 = A_{t+1} B_{t+1} \\
    & B_{t+1} = B_t + \sigma_A a_{t+1} \\
    & \label{eq:At1}A_{t+1} = A_t + a_{t+1} \\
    & \label{eq:alphat}\alpha_t = \frac{a_{t+1}}{A_{t+1}}\\
    & \beta_t = \frac{\sigma_A a_{t+1}}{B_{t+1}}.
\end{align}

Then, we take arbitrary $x_0, y_0, v_0 \in \mathbb{R}^{E \times d}$ and recursively define:

\begin{equation}
    y_t = \frac{(1 - \alpha_t) x_t + \alpha_t(1 - \beta_t)v_t}{1 - \alpha_t \beta_t}
\end{equation}
\begin{equation}
    \label{eq:vtplus1}
    v_{t+1} = (1 - \beta_t)v_t + \beta_t y_t - \frac{a_{t+1}}{B_{t+1} p_{ij}} U_{ij} \nabla F_A^*(y_t)
\end{equation}
\begin{equation}
\label{eq:xtplus1}
    x_{t+1} = y_t - \frac{1}{\mu_{ij}^2(\sigma_i^{-1} + \sigma_j^{-1})}U_{ij} \nabla F_A^*(y_t).
\end{equation}

For convenience, we write $w_t = (1 - \beta_t) v_t + \beta_t y_t$. Then, we study the quantity $r_t^2 = \| v_t - x^* \|^2_{A^+A}$ where $x^*$ is the minimizer of $F_A^*$. Recall that $g_{ij}(y_t) = \frac{a_{t+1}}{B_{t+1} p_{ij}} U_{ij} \nabla F_A^*(y_t)$.

\begin{equation}
\label{eq:vt1gt}
\| v_{t+1} - x^* \|^2_{A^+A} = \| w_t - x^* \|^2_{A^+A} + \|\frac{a_{t+1}}{B_{t+1} p_{ij}} U_{ij} \nabla F_A^*(y_t)\|^2_{A^+A} - 2 \frac{a_{t+1}}{B_{t+1} p_{ij}} \nabla F_A^*(y_t)^T U_{ij} A^+A (w_t - x^*).
\end{equation}
Then, 
\begin{equation}
\mathbb{E}_{ij}[\frac{a_{t+1}}{B_{t+1} p_{ij}} \nabla F_A^*(y_t)^T U_{ij}] = \sum_{ij} p_{ij} \frac{a_{t+1}}{B_{t+1} p_{ij}} \nabla F_A^*(y_t)^T U_{ij} = \frac{a_{t+1}}{B_{t+1}} \nabla F_A^*(y_t)^T.
\end{equation}

Therefore, Equation~\eqref{eq:vt1gt} can be rewritten:

\begin{equation}
\mathbb{E}\left[r_{t+1}^2\right] \leq \mathbb{E}\left[\| w_t - x^* \|^2_{A^+A}\right] + \mathbb{E}\left[\frac{e_{ij}^TA^+Ae_{ij}a_{t+1}^2}{B_{t+1}^2 p_{ij}^2} \|U_{ij} \nabla F_A^*(y_t)\|^2\right] - 2 \frac{a_{t+1}}{B_{t+1}} \nabla F_A^*(y_t)^T (w_t - x^*).
\end{equation}

Now, the goal is to write a smoothness equation to control the middle term and make $F_A^*(x_{t+1})$ appear. This control is provided by Equation~\eqref{eq:smoothness} in Lemma~\ref{lemma:smoothness_FA}.

Therefore, if we choose $S$ such that for all $(i,j)$, $\frac{e_{ij}^T A^+A e_{ij}(\sigma_i^{-1} + \sigma_j^{-1})\mu_{ij}^2}{p_{ij}^2} \leq S^2$ then the equation becomes:

\begin{equation}
\| v_{t+1} - x^* \|^2_{A^+A} \leq \| w_t - x^* \|^2_{A^+A} + \frac{2 S^2 a_{t+1}^2}{B_{t+1}^2} \left[F_A^*(y_t) - \mathbb{E}\left[ F_A^*(x_{t+1})\right] \right] - 2 \frac{a_{t+1}}{B_{t+1}} \nabla F_A^*(y_t)^T (w_t - x^*).
\end{equation}

We use the convexity of the squared norm to get that $\| w_t - x^* \|^2_{A^+A} \leq (1 - \beta_t) r_t^2 + \beta_t \| y_t - x^* \|^2_{A^+A}$. Then, if we multiply both sides by $B_{t+1}$ we get:

\begin{equation}
\label{eq:exp_inec}
B_{t+1}r_{t+1}^2 \leq B_t r_t^2 + \beta_t B_{t+1} \| y_t - x^* \|^2_{A^+A} + \frac{2 S^2 a_{t+1}^2}{B_{t+1}} \left[F_A^*(y_t) - \mathbb{E}\left[ F_A^*(x_{t+1})\right] \right] - 2 a_{t+1} \nabla F_A^*(y_t)^T (w_t - x^*).
\end{equation}

We can now use Equation~\eqref{eq:strong_convexity} of Lemma~\ref{lemma:sc_FA} (strong convexity of $F_A^*$ in norm $A^+A$) to write that:

\begin{align*}
- a_{t+1}& \nabla F_A^*(y_t)^T (w_t - x^*) = a_{t+1} \nabla F_A^*(y_t)^T A^+A \left(x^* - y_t + \frac{1 - \alpha_t}{\alpha_t}(x_t - y_t)\right)\\
&\leq a_{t+1} \left(F_A^*(x^*) - F_A^*(y_t) - \frac{1}{2} \sigma_A \|y_t - x^*\|^2_{A^+A} + \frac{1 - \alpha_t}{\alpha_t}(F_A^*(x_t) - F_A^*(y_t)) \right) \\
&\leq a_{t+1} F_A^*(x^*) - A_{t+1} F_A^*(y_t) + A_t F_A^*(x_t) - \frac{1}{2} a_{t+1} \sigma_A \|y_t - x^*\|^2_{A^+A}.
\end{align*}

Then, we combine the previous inequality with Equation~\eqref{eq:exp_inec} and we use the fact that $B_{t+1} \beta_t = a_{t+1} \sigma_A$ so that:

\begin{equation}
B_{t+1}r_{t+1}^2 \leq B_t r_t^2 + 2 A_{t+1}\left[F_A^*(y_t) - \mathbb{E}\left[ F_A^*(x_{t+1})\right] \right] - 2\left[  (A_{t+1} - A_t) F_A^*(x^*) - A_{t+1} F_A^*(y_t) + A_t F_A^*(x_t) \right],
\end{equation}

and so:

\begin{equation}
B_{t+1}r_{t+1}^2 - B_t r_t^2 \leq 2 A_t \left[F_A^*(x_t) - F_A^*(x^*)\right] - 2 A_{t+1}\left[\mathbb{E}\left[ F_A^*(x_{t+1})\right] - F_A^*(x^*) \right].
\end{equation}

By summing over all inequalities, we get that

\begin{equation}
2 A_t \mathbb{E}\left[F_A^*(x_{t}) - F_A^*(x^*)\right] + B_t \mathbb{E}[r_t^2] \leq r_0^2.
\end{equation}

Now, we need to estimate the growth of coefficients $A_t$ and $B_t$. We prove by induction on $t$ that if $A_0 = 1$ and $B_0 = \sigma_A$ then for all $t \in \mathbb{N}$, $\alpha_t = \beta_t = \frac{\sqrt{\sigma_A}}{S}$ $A_t = \left(1 - \frac{\sqrt{\sigma_A}}{S}\right)^{-t}$ and $B_t = \sigma_A A_t$.

We can first combine Equation~\eqref{eq:At1} and Equation~\eqref{eq:alphat} to obtain 

\begin{align}
    &\label{eq:alphak} a_{t+1}(\alpha_t^{-1} - 1) = A_t\\
    &\label{eq:betak} a_{t+1}(\beta_t^{-1} - 1) = \frac{B_t}{\sigma_A}
\end{align}

For $t=0$, we can combine equations~\eqref{eq:alphak} and \eqref{eq:betak} to obtain that $\alpha_0^{-1} - 1 = \beta_0^{-1} - 1$ (since $a_1 \neq 0$ and so $\alpha_0 = \beta_0$. Finally, 

\begin{equation*}
    a_1^2 S^2 = A_1 B_1 = \frac{a_1^2 \sigma_A}{\alpha_0 \beta_0}
\end{equation*}

and so $\alpha_0 = \beta_0 = \frac{\sqrt{\sigma_A}}{S}$.

Now suppose that the property is true for a given $t \geq 0$. Then, we use Equation~\eqref{eq:alphak} and the fact that $A_{t+1} = a_{t+1} + A_t$. Since $1 + (\alpha_t^{-1} - 1)^{-1} = \frac{\alpha_t^{-1} - 1 + 1}{\alpha_t^{-1} - 1} = (1 - \alpha_t)^{-1}$ then by induction assumption, $A_{t+1} = \left(1 - \frac{\sqrt{\sigma_A}}{S}\right)^{-t - 1}$. 

We use Equation~\eqref{eq:betak} in the same way to prove that $B_{t+1} = \sigma_A A_{t+1}$. 

Then, we use equations~\eqref{eq:alphak} and \eqref{eq:betak} at time $t+1$ to get that $\alpha_{t+1}^{-1} - 1 = \beta_{t+1}^{-1} - 1$ so $\alpha_{t+1} = \beta_{t+1}$. Their value can again be retrieved by using Equation~\eqref{eq:s2}, which finishes the induction.

We have proven that for this choice of $A_0$ and $B_0$ the $\alpha$ and $\beta$ coefficients are constant and are equal to $\theta = \frac{\sqrt{\sigma_A}}{S}$. Therefore, $v_{t+1} = (1 - \theta)v_t + \theta y_t - \frac{\theta}{p_{ij}\sigma_A} U_{ij} \nabla F_A^*(y_t)$. With this choice of parameters, $y_{t+1}$ can be expressed as:

\begin{equation*}
y_{t+1} = \frac{(1 - \theta)x_{t+1} + \theta(1 - \theta)v_{t+1}}{1 - \theta^2} = \frac{x_{t+1} + \theta v_{t+1}}{1 + \theta}.
\end{equation*}

Then, the coefficients of Algorithm~\ref{algo:adacd} are recovered by replacing $x_{t+1}$ and $v_{t+1}$ by their expressions in Equations~\eqref{eq:xtplus1} and \eqref{eq:vtplus1}. The actual values of $a_{t+1}$, $A_{t+1}$ and $B_{t+1}$ are only used for the analysis because only $\frac{a_{t+1}}{B_{t+1}} = \frac{\sigma_A}{\beta_t}$ appears in the recursion. 
\end{proof}

\end{document}